\theoremstyle{plain}
\newtheorem{theorem}{Theorem}[section]
\newtheorem{proposition}[theorem]{Proposition}
\theoremstyle{definition}
\newtheorem{remark}[theorem]{Remark}
\numberwithin{equation}{section}
\newtheorem*{theorem*}{Theorem} 
\newcommand{\Z}{{\mathbb Z}}
\newcommand{\R}{{\mathbb R}}
\newcommand{\N}{{\mathbb N}}
\def\XXint#1#2#3{{\setbox0=\hbox{$#1{#2#3}{\int}$}
\vcenter{\hbox{$#2#3$}}\kern-.5\wd0}}
\DeclareMathOperator{\supp}{supp}
\providecommand{\norm}[1]{ \lVert#1  \rVert}
\title[Fractional maximal functions and Fourier multipliers]{Regularity of fractional maximal functions through Fourier multipliers}
\author{David Beltran, Jo\~ao Pedro Ramos and Olli Saari}
\date{\today}
\address{David Beltran, Basque Center for Applied Mathematics (BCAM), Bilbao, Spain}
	\email{dbeltran@bcamath.org}
\address{Jo\~ao Pedro Ramos, Mathematical Institute, 
	University of Bonn,
	Endenicher Allee 60, 53115, Bonn,
	Germany}
	\email{jpgramos@math.uni-bonn.de}
\address{Olli Saari, Mathematical Institute, 
	University of Bonn,
	Endenicher Allee 60, 53115, Bonn,
	Germany}
	\email{saari@math.uni-bonn.de}
\subjclass[2010]{Primary: 42B15, 42B25, 46E35} 
\keywords{Maximal function,
Sobolev space,
bounded variation,
spherical mean}
\thanks{DB was supported by the ERCEA Advanced Grant 2014 669689 - HADE, the MINECO project MTM2014-53850-P, the Basque Government project IT-641-13, the Basque Government through the BERC 2014-2017 program, by Spanish Ministry of Economy and Competitiveness MINECO: BCAM Severo Ochoa excellence accreditation SEV-2013-0323. JPR acknowledges the Deutscher Akademischer Austauschdienst (DAAD) for funding. OS was supported by the Hausdorff Center for Mathematics and DFG grant CRC 1060.}
\begin{document}

\begin{abstract}
We prove endpoint bounds for derivatives of fractional maximal functions with either smooth convolution kernel or lacunary set of radii in dimensions $n \geq 2$. We also show that the spherical fractional maximal function maps $L^{p}$ into a first order Sobolev space in dimensions $n \geq 5$.   
\end{abstract}

\maketitle

\section{Introduction}

Define the fractional maximal function as 
\begin{equation*}
M_\alpha f (x) =  \sup_{t  > 0} \left \lvert \frac{t^{\alpha}}{|B(x,t)|} \int_{B(x,t)} f \, dy \right \rvert
\end{equation*}
for $\alpha \in [0,n)$. The study of its regularity properties was initiated in \cite{KS2003} by Kinnunen and Saksman. They proved the pointwise inequality
\begin{equation}\label{Kinnunen Saksman}
 |\nabla M_{\alpha}|f|(x)| \leq C M_{\alpha - 1}|f|(x), \quad \alpha \geq 1
\end{equation}
with a constant $C$ only depending on the dimension $n$ and $\alpha$. This inequality has two interesting consequences. First, $M_{\alpha}$ maps $L^{p}(\mathbb{R}^{n})$ into a first order Sobolev space. Second, as noted by Carneiro and Madrid \cite{CM2015}, the pointwise bound together with the Gagliardo--Nirenberg--Sobolev inequality implies
\begin{equation}
\label{intro:eq2}
\norm{\nabla M_{\alpha} f }_{L^{p}} \leq C \norm{ M_{\alpha-1} f }_{L^{p}} \leq C  \norm{  f }_{L^{n/(n-1)}} \leq C \norm{\nabla f}_{L^{1}}
\end{equation}
for $\alpha\geq 1$ and $p = n/(n-\alpha)$. When $\alpha \in (0,1)$, inequality \eqref{Kinnunen Saksman} no longer helps, and the conclusion of \eqref{intro:eq2} is an open problem. When $M_{\alpha}$ is replaced by its non-centred variant, the analogous result is due to Carneiro and Madrid \cite{CM2015} for $n = 1$ and Luiro and Madrid \cite{LM2017} for $f$ radial and $n \geq 2$. For other aspects of the regularity of fractional maximal functions, see e.g.~ \cite{HKKT2015,HKNT2013} and the references therein.

Our first result is a smooth variant of the inequality \eqref{intro:eq2} for $\alpha \in (0,1)$ and $n \geq 2$. Define the lacunary fractional maximal function as   
\[M_{\alpha}^{lac}f (x) := \sup_{ k \in \mathbb{Z} } \left \lvert \frac{2^{\alpha k}}{|B(0,2^{k})|} \int_{B(x,2^{k})} f \, dy \right \rvert. \]
For integrable $\varphi$ and $t > 0$, let $\varphi_t(x)=t^{-n}\varphi(x/t)$. Assume, for simplicity, that $\varphi$ is a positive Schwartz function and define the smooth fractional maximal function as 
\[M_{\alpha}^{\varphi}f(x) = \sup_{t > 0} t^{\alpha} | \varphi_{t} * f(x) | .\]
The smoothness requirement can be substantially relaxed, see $\S\S$\ref{sec:smooth}.
  
\begin{theorem}
\label{thm:main solid balls}
Let $f \in \dot{\mathrm{BV}}(\mathbb{R}^n)$ and suppose that $\alpha \in (0,1)$ and $n \geq 2$. Then for $\mathcal{M}_\alpha \in \{ M_\alpha^{lac}, M_\alpha^\varphi\}$, there exists a constant $C$ only depending on dimension $n$, $\alpha$ and $\varphi$ such that 
\[\norm{\nabla \mathcal{M}_\alpha f}_{L^{p}(\mathbb{R}^n)} \leq C | f |_{\mathrm{BV}(\mathbb{R}^n)}  \]
for $p = n/(n-\alpha)$. 
\end{theorem}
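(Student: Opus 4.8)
The plan is to control $\nabla\mathcal M_\alpha f$ by a \emph{maximal Fourier multiplier operator with a mean-zero kernel}, and then to recognise the resulting quantity as (dominated by) a Triebel--Lizorkin-type seminorm of $f$, so that the theorem becomes an endpoint Sobolev embedding for $\dot{\mathrm{BV}}$. First I would reduce, by strict density of $C_c^\infty$ in $\dot{\mathrm{BV}}$ together with lower semicontinuity, to the a priori bound for smooth $f$. Write $K$ for the common profile of the family ($K=\varphi$ in the smooth case, $K=\chi_{B(0,1)}/|B(0,1)|$ with $t\in 2^{\mathbb Z}$ in the lacunary case), so $\mathcal M_\alpha f=\sup_t t^\alpha|K_t*f|$. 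Each $x\mapsto t^\alpha K_t*f(x)$ is smooth, and at a point where the supremum is attained $\partial_t(t^\alpha K_t*f)=0$; hence the standard lemma on Sobolev regularity of suprema of smooth families (as in \cite{KS2003} and its refinements) gives $\mathcal M_\alpha f\in W^{1,1}_{\mathrm{loc}}$ with, a.e.,
\[ |\nabla \mathcal M_\alpha f(x)|\le \sup_{t}t^{\alpha}|\nabla(K_t*f)(x)|=\sup_t t^{\alpha}|K_t*\nabla f(x)|=\sup_t t^{\alpha-1}|(\nabla K)_t*f(x)|. \]
The last identity (using $\nabla(K_t)=t^{-1}(\nabla K)_t$) is the key algebraic move: $\nabla K$ has \emph{vanishing integral} (it is a derivative of an integrable function, resp.\ a signed spherical surface measure in the lacunary case), so $(\nabla K)_t*f$ is a Littlewood--Paley-type piece of $f$ at frequency $\sim 1/t$, with $\widehat{\nabla K}(\xi)=i\xi\,\widehat K(\xi)$ vanishing exactly to first order at the origin and decaying at infinity.

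Consequently $\|\nabla\mathcal M_\alpha f\|_{L^p}\lesssim \|\sup_t t^{-(1-\alpha)}|(\nabla K)_t*f|\|_{L^p}$. Since $1-\alpha\in(0,1)$, since $\nabla K$ has mean zero, and since $\widehat{\nabla K}$ does not vanish identically on any ray from the origin, this is comparable to $\|f\|_{\dot F^{1-\alpha}_{p,\infty}}$ (only the upper bound is needed, which is the routine direction of the Peetre/maximal characterisation; the continuum supremum is controlled by the dyadic one because $t\mapsto t^{-(1-\alpha)}(\nabla K)_t*f$ is slowly varying in $\log t$, and the non-smoothness of the ball average is harmless since its surface-measure derivative still carries the requisite cancellation and Fourier decay). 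The exponent is forced by scaling: under $f\mapsto f(\lambda\,\cdot)$ both $|f|_{\mathrm{BV}}$ and $\|f\|_{\dot F^{1-\alpha}_{p,\infty}}$ scale like $\lambda^{1-n}$ exactly when $(1-\alpha)-n/p=1-n$, i.e.\ $p=n/(n-\alpha)$. The theorem is thus reduced to the endpoint embedding
\[ \dot{\mathrm{BV}}(\R^n)\hookrightarrow \dot F^{1-\alpha}_{n/(n-\alpha),\infty}(\R^n),\qquad n\ge 2,\ \alpha\in(0,1). \]

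To prove this embedding — the heart of the matter — I would argue geometrically rather than through function-space interpolation, which just fails here: one has $\dot{\mathrm{BV}}\hookrightarrow \dot B^1_{1,\infty}$ (via second differences and $\|f(\cdot+h)-f\|_{L^1}\le|h|\,|f|_{\mathrm{BV}}$), but Franke's embedding of Besov into Triebel--Lizorkin spaces needs a Besov third index $\le p$, not $\infty$, so it does not conclude (indeed $\chi_{B(0,1)}\notin\dot F^1_{1,\infty}$). Instead, by the coarea formula $Df=\int_{\R}D\chi_{\{f>\lambda\}}\,d\lambda$ and Minkowski's inequality, it suffices to prove $\|\sup_t t^\alpha|K_t*D\chi_E|\|_{L^p}\lesssim P(E)$ for $E$ a bounded set of finite perimeter, with $|D\chi_E|$ equal to $(n{-}1)$-dimensional Hausdorff measure on the reduced boundary $\partial^*E$. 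Split $\R^n$ into $\{\dist(\cdot,E)<C\diam E\}$ and its complement. On the far piece one uses the cancellation $\int D\chi_E=\int_{\partial^*E}\nu_E\,d\mathcal H^{n-1}=0$ (Gauss--Green): replacing $K_t(x-y)$ by $K_t(x-y)-K_t(x)$ gains a factor $|y|/t$, hence one extra power of decay, giving $\sup_t t^\alpha|K_t*D\chi_E(x)|\lesssim|x|^{\alpha-n-1}\diam(E)\,P(E)$, which is $p$-integrable at infinity since $p=n/(n-\alpha)$. On the near (bounded) piece the governing estimate is $\sup_t t^\alpha|K_t*D\chi_E(x)|\lesssim\dist(x,\partial^*E)^{\alpha-1}$, and $\dist(\cdot,\partial^*E)^{\alpha-1}\in L^p_{\mathrm{loc}}$ precisely when $(1-\alpha)\tfrac{n}{n-\alpha}<1$, i.e.\ when $n\ge 2$; this is the only place the dimensional hypothesis is used, and it is sharp, since for $n=1$ a step function has $Df$ a point mass and $\sup_t t^\alpha|K_t*Df|\sim\dist(\cdot,\operatorname{supp}Df)^{\alpha-1}\notin L^{1/(1-\alpha)}_{\mathrm{loc}}$.

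The main obstacle is precisely that near-boundary estimate for perimeter-minimising sets whose boundary is \emph{not} Ahlfors regular: the bound $\sup_t t^\alpha\,K_t*|D\chi_E|(x)\lesssim\dist(x,\partial^*E)^{\alpha-1}$ needs the upper density bound $|D\chi_E|(B(x,t))\lesssim t^{n-1}$, which a general set of finite perimeter need not satisfy, while the crude bound $|D\chi_E|(B(x,t))\le P(E)$ only yields $\dist(x,\partial^*E)^{\alpha-n}$, whose $p$-th power $\dist^{-n}$ is not locally integrable near an $(n{-}1)$-dimensional set. Hence the cancellation inherent in the vector-valued quantity $K_t*D\chi_E=\nabla(K_t*\chi_E)$ must be retained even close to $\partial^*E$ — for instance by estimating $|\nabla(K_t*\chi_E)(x)|$ through the oscillation of the smoothed indicator $K_t*\chi_E$ about its extremal values combined with the relative isoperimetric inequality, or via a Whitney decomposition of $\R^n\setminus\partial^*E$ adapted to the perimeter measure. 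Everything else — the two explicit kernels, the dyadic-to-continuous passage in the supremum, and relaxing the Schwartz hypothesis on $\varphi$ — should be routine.
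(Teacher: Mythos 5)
Your outline diverges completely from the paper's proof (which never takes a pointwise gradient bound: it estimates $\mathcal{M}_\alpha D^h f$ as a maximal Fourier multiplier, proves single-scale $L^{1}\to L^{1,\infty}$ and $L^{2}$ bounds on each Littlewood--Paley piece, interpolates, sums to get $\|\cdot\|_{\dot B^{1-\alpha}_{p,p}}$ on the right, and then invokes the Cohen--Dahmen--Daubechies--DeVore interpolation inequality to pass from $\dot B^{1-\alpha}_{p,p}$ to $|f|_{\mathrm{BV}}$ --- precisely the nontrivial substitute for the ``just fails'' interpolation you dismiss). Your route is legitimate in spirit, and the reductions you perform (finite differences/Kinnunen-type lemma, $\nabla(K_t*f)=t^{-1}(\nabla K)_t*f$, coarea to sets of finite perimeter, near/far splitting with Gauss--Green cancellation in the far field) are all sound. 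But the argument has a genuine hole exactly where you flag one, and it is not a patchable technicality: the near-boundary estimate $\sup_t t^{\alpha}|\nabla(K_t*\chi_E)(x)|\lesssim \dist(x,\partial^*E)^{\alpha-1}$ together with the local $L^{p}$-integrability of the right-hand side requires quantitative (and uniform in $E$) upper Ahlfors regularity of $|D\chi_E|$ and of $\partial^* E$, which general sets of finite perimeter do not have --- and even for smooth $f$, where a.e.\ level set is a smooth hypersurface, the regularity constants are not uniform over $\lambda$, so the coarea integration does not close. The suggested repairs (relative isoperimetric inequality, Whitney decomposition adapted to the perimeter measure) are named but not executed; the relative isoperimetric bound only yields $t^{\alpha-1}\theta(x,t)^{n/(n-1)}$ with $\theta$ the perimeter density ratio, which still blows up as $t\to 0$ near the boundary without further input.

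Two further remarks. First, a structural warning sign: nothing in your geometric near/far argument uses the smoothness of $\varphi$ or the lacunarity of the radii (those enter only in your Triebel--Lizorkin packaging), so if the near-boundary step could be completed routinely it would prove $\|\nabla M_\alpha f\|_{L^{n/(n-\alpha)}}\lesssim|f|_{\mathrm{BV}}$ for the full rough fractional maximal function --- which the paper explicitly records as an open problem for $\alpha\in(0,1)$, $n\ge 2$. The missing step is therefore the heart of the difficulty, not a loose end. Second, even the reduction $\|\sup_t t^{-(1-\alpha)}|(\nabla K)_t*f|\|_{L^p}\lesssim\|f\|_{\dot F^{1-\alpha}_{p,\infty}}$ is not the ``routine direction'' for the ball kernel: $(\nabla K)$ is then a surface measure, and $\|\sigma_t * \check\psi_j\|_{L^1}$ does not decay in $2^jt$ (only the $L^2\to L^2$ norm does, via Bessel decay), so the standard kernel characterisations of Triebel--Lizorkin spaces do not apply directly; this is exactly why the paper interpolates an $L^1$ weak-type bound against an $L^2$ Fourier-decay bound scale by scale instead of quoting a Peetre-type equivalence.
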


The proof of this theorem uses the $g$-function technique familiar from Stein's spherical maximal function theorem. The idea is to follow the scheme behind the short estimation \eqref{intro:eq2}. The Fourier transform is used to find a substitute for \eqref{Kinnunen Saksman} at the level of Besov spaces, from which the conclusion then follows by a refined Gagliardo--Nirenberg--Sobolev type embedding theorem \cite{CDDD2003}. The last step requires $n > 1$  whereas the smoothness condition on the maximal operator is imposed by Fourier analysis. We stress that the right hand side of the conclusion is BV norm instead of the considerably larger homogeneous Hardy--Sobolev norm one might first expect. The detailed proof is given in $\S$\ref{sec:solid balls}, and all necessary definitions can be found in $\S$\ref{sec:definitions}. To the best of our knowledge, Fourier transform techniques have not been exploited effectively in the study of endpoint regularity of maximal functions prior to this work.

The background of the question \eqref{intro:eq2} goes back to Kinnunen's theorem \cite{Kinnunen1997,KL1998} asserting that the Hardy--Littlewood maximal function is bounded in $W^{1,p}$ with $p > 1$. His result was later extended to $W^{1,1}$  in the form
\begin{equation}
\label{intro:w11}
\| \nabla M f \|_{L^1(\mathbb{R}^n)} \leq C \| \nabla f \|_{L^1(\mathbb{R}^n)}  
\end{equation}
by Tanaka \cite{Tanaka2002} when $n = 1$ and Luiro \cite{Luiro2017} when $n \geq 2$ and $f$ is radial. Here $M$ is the non-centred Hardy--Littlewood maximal function. The same inequality for $M_0$ (centred maximal function) was established by Kurka \cite{Kurka2010} when $n = 1$, and the question is open in dimensions $n \geq 2$. Kurka's theorem can be seen as the limiting case $\alpha = 0$ of \eqref{intro:eq2}.

In connection to \eqref{intro:w11}, maximal functions with smooth convolution kernels are better understood than the Hardy--Littlewood maximal function. Inequality \eqref{intro:w11} can be proved with sharp constant for many smooth kernels \cite{CFS2015,CS2013} whereas the best constant for centred Hardy--Littlewood maximal function is not known (for the non-centred maximal function \cite{AP2007} as well as for certain non-tangential maximal functions \cite{Ramos2017} the constant is one). Similarly, a Hardy--Sobolev bound corresponding to \eqref{intro:w11} is known for smooth maximal functions in all dimensions \cite{PPSS2017} whereas the progress for the standard maximal function is limited to the case of radial functions \cite{Luiro2017}. Finally, there are metric measure spaces where Kinnunen's theorem does not hold but suitable smoother maximal functions satisfy a Sobolev bound \cite{AK2010}. Theorem \ref{thm:main solid balls} can be seen as a part of this line of research attempting to understand \eqref{intro:eq2} and \eqref{intro:w11} first in the case of smooth maximal functions.

The second part of the paper studies the regularity of the spherical fractional maximal function
\begin{equation}
\label{sph:definition}
S_{\alpha} f(x) := \sup_{t > 0}  |t^{\alpha}  \sigma_t * f(x)| , 
\end{equation} 
where $\sigma_t$ is the normalized surface measure of the sphere $\partial B(0,t)$. For $\alpha = 0$, one recovers the spherical maximal function of Stein \cite{Stein1976} ($n \geq 3$) and Bourgain \cite{Bourgain1986} ($n = 2$). For $\alpha > 0$, $L^{p} \to L^{q}$ bounds for this operator follow from the work of Schlag \cite{Schlag1997} ($n=2$) and Schlag and Sogge \cite{SS1997} ($n \geq 3$). It is natural to ask if the fractional spherical maximal function has regularizing properties similar to \eqref{Kinnunen Saksman}. Our result in this direction is the following.

\begin{theorem}
\label{thm:spherical_technical}
Let $n \geq 5$, $n/(n-2) < p \leq q < \infty$ and 
\begin{equation*}
\alpha(p):=
\begin{cases}
\frac{n^2-2n-1}{n-1}  -\frac{2n}{p(n-1)} & \text{if} \quad  \frac{n}{n-2} < p \leq  \frac{n^2+1}{n^2-2n-1} \\
\frac{n-1}{p} & \text{if} \quad \frac{n^2+1}{n^2-2n-1} < p \leq n-1 .
\end{cases}
\end{equation*}
Assume that
\[\frac{1}{q}=\frac{1}{p}- \frac{\alpha-1}{n}, \qquad 1 \leq \alpha <  \alpha(p). \]
Then, for any $f \in L^p$, $S_{\alpha} f$ is weakly differentiable and 
\[\left \lVert \nabla S_{\alpha} f\right \rVert_{L^{q}} \lesssim \norm{f}_{L^{p}}.\]
\end{theorem}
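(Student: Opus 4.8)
The plan is to imitate the $g$-function strategy used for Theorem~\ref{thm:main solid balls}, but now exploiting the fixed-time $L^p\to L^q_{\mathrm{loc}}$ smoothing properties of the spherical averages $t^\alpha\sigma_t*f$. First I would reduce to the full maximal function by the standard linearization: write $S_\alpha f(x)=t(x)^\alpha\sigma_{t(x)}*f(x)$ for a measurable function $t(x)$, and observe that it suffices to bound the "square function" $G f(x)^2=\int_0^\infty |\partial_s (s^\alpha \sigma_s *f)(x)|^2 \,s\,\mathrm ds$ (or a Littlewood--Paley discretized version), since controlling the variation of $s\mapsto s^\alpha\sigma_s*f$ in $L^q$ controls the gradient of the supremum via the chain rule and $\nabla(\sigma_s*f)=\sigma_s*\nabla f$ type identities combined with the Sobolev embedding step $\alpha-1$ derivatives $\leftrightarrow$ $1/p-1/q=(\alpha-1)/n$. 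Actually, more precisely, I expect one first proves a \emph{fixed-radius} estimate of the form $\|t^\alpha \sigma_t * f\|_{\dot W^{1,q}} \lesssim t^{?}\|f\|_{L^p}$ together with a matching estimate for the $t$-derivative, and then sums over a lacunary decomposition of radii, exactly as in the Stein maximal function proof where one writes $\sigma_t = \sum_j \sigma_t^{(j)}$ with frequency $\sim 2^j$ pieces.

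The key analytic input is the decay of the Fourier transform $\widehat{\sigma}(\xi)\sim |\xi|^{-(n-1)/2}$ and, crucially, the \emph{local smoothing / square-function} estimates for the wave-type operator $e^{it\sqrt{-\Delta}}$, which are exactly what Schlag \cite{Schlag1997} and Schlag--Sogge \cite{SS1997} used to prove the $L^p\to L^q$ bounds for $S_\alpha$. The second step, therefore, is: for each frequency-localized piece at scale $2^j$, use the stationary phase expansion $\sigma_t*f = \sum_\pm t^{-(n-1)/2} e^{\pm i t\sqrt{-\Delta}} a_\pm(t\sqrt{-\Delta}) f + (\text{lower order})$, so that one derivative costs a factor $2^j$, and the $\alpha$-power and the $L^p\to L^q$ Sobolev gain interact so that the geometric series in $j$ converges precisely when $\alpha<\alpha(p)$. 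The two regimes in the definition of $\alpha(p)$ should come from two different estimates being sharp: the range $p\le (n^2+1)/(n^2-2n-1)$ from the $L^p\to L^q$ fixed-time estimate for $\sigma_t$ (the $\widehat\sigma$ decay exponent $(n-1)/2$ entering linearly), and the range $p>(n^2+1)/(n^2-2n-1)$ from an $L^p$-based local smoothing estimate where the exponent $(n-1)/p$ is the relevant one; the breakpoint is where the two lines cross, hence the arithmetic expression $(n^2+1)/(n^2-2n-1)$.

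The third step is the summation and the passage from the square function to the pointwise maximal function and then to weak differentiability: here I would use that $\partial_s(s^\alpha\sigma_s*f)$ is, up to the $g$-function machinery, controlled in $L^q$, together with an $L^q$ bound on $S_\alpha f$ itself (available in the stated range since $\alpha\ge1$ and $q<\infty$), to conclude via a now-standard argument (as in Kinnunen--Saksman, or the difference-quotient characterization of $W^{1,q}$) that $S_\alpha f\in\dot W^{1,q}$ with the claimed bound. The requirement $n\ge5$ should enter because one needs enough Fourier decay (i.e. $(n-1)/2$ large) for the endpoint square-function estimate to close while still allowing $\alpha\ge1$; in low dimensions the admissible $\alpha$ would drop below $1$ and the gradient bound becomes vacuous or false.

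The main obstacle I anticipate is the \emph{endpoint} nature of the square-function estimate: one needs an $L^p(\mathbb R^n)\to L^q(\mathbb R^n\times[1,2])$ estimate for $e^{it\sqrt{-\Delta}}$ composed with suitable symbols, \emph{with the sharp loss of derivatives}, uniformly enough to sum the lacunary pieces, and this is genuinely delicate --- it is essentially a local smoothing statement, and getting the precise exponent $\alpha(p)$ (rather than an $\varepsilon$-loss) requires carefully combining the fixed-time $L^p\to L^q$ bounds of Schlag--Sogge with interpolation against $L^2$-based Sobolev estimates and Littlewood--Paley theory in $L^q$ (which forces $q<\infty$). A secondary technical point is handling small and large radii $t$: one uses the $t^\alpha$ factor with $\alpha\ge1$ to sum over $t\to0$ and the $L^p\to L^q$ mapping with the dilation scaling $1/q=1/p-(\alpha-1)/n$ to sum over $t\to\infty$, and one must check these two summations are compatible with the choice of $\alpha(p)$ made for the frequency decomposition.
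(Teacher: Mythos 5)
Your overall architecture matches the paper's: reduce to difference quotients, pull out one derivative as a multiplier $|\xi|$, decompose $\widehat{\sigma}$ into frequency-localized pieces linked to the half-wave propagator $e^{\pm it\sqrt{-\Delta}}$, prove single-scale $L^p\to L^q$ bounds with geometric decay in $j$, and identify $\alpha(p)$ as the boundary of an interpolation region whose two pieces come from an $L^2$-based estimate and an $L^p$-based local smoothing estimate. However, you leave the decisive step unresolved: you flag the ``endpoint square-function estimate'' as the main obstacle, but the actual resolution is not in Schlag or Schlag--Sogge. The paper's key new input is the \emph{sharp} local smoothing estimate for $e^{it\sqrt{-\Delta}}$ in $L^{n-1}(\mathbb{R}^n)$ (Proposition \ref{thm:local smoothing so far}, a consequence of Bourgain--Demeter decoupling via Garrig\'os--Seeger), which requires $n-1\geq \frac{2(n+1)}{n-1}$ and hence $n\geq 5$; combined with a Sobolev embedding in $t$ it yields the $L^{n-1}\to L^{n-1}$ bound of Proposition \ref{sph:prop} with only an $\epsilon$-loss. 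This bound is then interpolated, following Lee's scheme, with four further single-scale maximal estimates to produce the polygon $P$ whose vertices give exactly the two formulas for $\alpha(p)$. Without this input your geometric series in $j$ does not converge in the claimed range, and your attribution of $n\ge 5$ to ``enough Fourier decay'' misses that the constraint comes from the admissible exponent range of the decoupling-based local smoothing theorem.

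A second genuine gap is the case $q\leq 2$. Your passage from single-scale bounds to the full supremum (summing lacunary blocks of radii, dominating $\ell^\infty$ by $\ell^q$, controlling $\ell^q$ by $\ell^2$ and invoking Littlewood--Paley) only works for $q\geq 2$; the paper explicitly notes that $q\ge2$ is needed for this upgrade, not for the single-scale estimates themselves. For $q\le 2$ the paper switches to a completely different argument: it factors $S_\alpha D^h = T^* I_{\alpha-1}T_h$ with $I_{\alpha-1}$ handled by Hardy--Littlewood--Sobolev, splits the multiplier $|\xi|^\alpha\widehat{\sigma}(\xi)$ into low and high frequencies, dominates the low-frequency part by the Hardy--Littlewood maximal function, and applies Rubio de Francia's maximal multiplier theorem to the high-frequency part (using decay $|\xi|^{-a}$ with $a=\frac{n-1}{2}-\alpha>\frac12$). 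Your proposal contains no mechanism for this regime, so as written it only covers part of the stated range of $(p,q)$.
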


The proof of this theorem is also based on the use of the Fourier transform. When $q \geq 2$, we study $L^{p} \to L^{q}$ estimates for a maximal multiplier operator in analogy with the estimates in \cite{Schlag1997,SS1997,Lee2003} for the spherical maximal function. Since Theorem \ref{thm:spherical_technical} is a statement at the derivative level, the corresponding multiplier enjoys worse Fourier decay than $\widehat{\sigma}$. This forces us to study the behavior in $L^{p}$ with large $p$ more carefully than what is needed to understand $L^{p}$ mapping properties of the spherical maximal function. We take advantage of the sharp local smoothing estimate for the wave equation in $L^{n-1}(\mathbb{R}^{n})$, which is available whenever $n \geq 5$ thanks to recent advances in decoupling theory (see \cite{BD2015,Garrigos2009,Garrigos2010,Laba2002,Wolff2000} and \cite{BHS2018,HNS2011,LS2013,MSS1992,Sogge1991} for more on decoupling and local smoothing estimates). We remark that results in $n=4$ could be obtained upon further progress on local smoothing estimates. 



\bigskip

\noindent \textbf{Acknowledgements.}
We would like to thank Juha Kinnunen for his question about regularising properties of the fractional spherical maximal function, which led to this work. We also thank Jonathan Hickman for discussions on the spherical maximal function and local smoothing estimates. 

\section{Notation and Preliminaries}\label{sec:definitions}

\subsection{Notation}
All function spaces are defined over $\mathbb{R}^{n}$, and it is written, for instance, $L^{2}$ for $L^{2}(\mathbb{R}^{n})$. The letter $C$ denotes a generic constant whose value may vary from line to line. Its dependency on other parameters will be clear from the context. The notation $A \lesssim B$ is used if $A \leq C B$ for such a constant $C$, and similarly $A \gtrsim B$ and $A \sim B$. The Fourier transform of a tempered distribution $f \in \mathcal{S}'$ is denoted by $\widehat{f}$ or $\mathcal{F}(f)$ and its inverse Fourier transform by $\mathcal{F}^{-1}(f)$ or $f^{\vee}$; in particular for a Schwartz function $f \in \mathcal{S}$,
\[\widehat{f}(\xi) = \mathcal{F}f (\xi) = \int_{\mathbb{R}^{n}} e^{ - 2 \pi i x \cdot \xi} f(x) \, dx .\]
Given any multi-index $\gamma \in \N^n$, $\partial^\gamma$ denotes
$$
\partial^\gamma f =\partial_{x_1}^{\gamma_1} \cdots \partial_{x_n}^{\gamma_n} f.
$$
For any $\alpha \in \R$, the notation $(-\Delta)^{\alpha/2}$ is taken to denote the operator associated to the Fourier multiplier $|\xi|^\alpha$.

\subsection{Besov spaces and Littlewood--Paley pieces}
\label{subsec:besov}
Given a smooth function $\psi \in C^\infty_c$ supported in $\{ \xi \in \R^n: 2^{-1} < |\xi | < 2\}$ and such that
$$
\sum_{j \in \mathbb{Z}} \psi(2^{-j} \xi) = 1
$$
for $\xi \neq 0$, let $f_j$ denote the Littlewood--Paley piece of $f$ at frequency $2^{j}$, given by $\widehat{f_j}=\widehat{f}\psi(2^{-j} \cdot)$. The Besov seminorm for $\dot{B}_{p,q}^{s}$ for $s \in \mathbb{R}$ and $p,q \in [1,\infty] $ is defined as 
\[\norm{f}_{\dot{B}_{p,q}^{s}} =  \Big( \sum_{j \in \mathbb{Z}} 2^{q j s} \norm{f_j}_{L^{p}}^{q} \Big)^{1/q};\] 
the seminorms defined through different Littlewood-Paley functions $\psi$ are comparable (see \cite[Chapter 6]{BL1976} for further details). 


\subsection{BV space}
A function $f$ is said to have bounded variation, and denoted by $f \in \dot{\mathrm{BV}}$, if its variation
$$
|f|_{\mathrm{BV}}:=\sup \Big\{ \int_{\R^n} f \: \mathrm{div} (g);  \:\: g \in C^1_c(\R^n, \R^n), \: \|  g\|_\infty \leq 1  \Big\}
$$
is finite, where $g=(g_1, \dots, g_n)$ and the $L^\infty$ norm is defined by
$$
\| g \|_{\infty}:= \| (\sum_{i=1}^n g_i^2 )^{1/2} \|_{L^\infty}.
$$
Note that if $f$ belongs to space $W^{1,1}$, integration by parts allows one to identify
$$
|f|_{\mathrm{BV}}=\int_{\R^n} |\nabla f|.
$$
See \cite[Chapter 5]{EG1992} for more.

\subsection{Finite differences}
\label{subsec:finitedifference}
Denote 
\[D^{h}f(x) = \frac{f(x+h)- f(x)}{|h|}.\]
Recall (see e.g~\cite[Chapter 5, $\S$5.8, Theorem 3]{Evans2010}) that if there is a finite constant $A$ such that  
\[ \left \lVert D^{h}f \right \rVert_{L^{p}} \leq A  \]
for all $h \in \mathbb{R}^{n}$, then the weak derivatives of $f$ exist and
\[\norm{\nabla  f }_{L^{p}} \leq C A \]
for a constant $C$ only depending on the dimension $n$. If $S$ is a sublinear operator that commutes with translations, then
\[ |D^{h}Sf| \leq |S D^{h}f|.\]
In particular, if $S$ is a maximal function and $f$ is a positive function, this allows us to reduce the question about differentiability to boundedness of a maximal multiplier for all Schwartz functions $f$. 

\section{Endpoint results} \label{sec:solid balls}

\subsection{A model result}\label{subsec:model}
It is instructive to start first with a model case for Theorem \ref{thm:main solid balls}. This consists in the study of the single scale version of the (rough) fractional maximal function $M_{\alpha}$, defined as
\[M_{\alpha}^{*}f = \sup_{1 \leq t \leq 2 } \left \lvert \frac{1}{|B(x,t)|} \int_{B(x,t)} f(y) \, dy \right  \rvert . \] 


\begin{theorem}
\label{thm:model_rough}
Let $0<\alpha < 1$, $p = n/(n-\alpha)$ and $n \geq 2$. Then there is a constant $C$ only depending on dimension $n$ and $\alpha$ such that for any $f \in \dot{B}_{p,1}^{1-\alpha}$
\[
\norm{ M_{\alpha}^{*} D^h f}_{L^{p}} \leq C \norm{f}_{\dot{B}_{p,1}^{1-\alpha}}
\]
uniformly on $h \in \mathbb{R}^{n}$.
\end{theorem}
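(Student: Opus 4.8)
The strategy is to dominate the single–scale maximal operator $M_\alpha^*$ by a fixed convolution operator after exploiting that the convolution kernel $t^{\alpha}|B(x,t)|^{-1}\mathbf 1_{B(x,t)}$ is smooth in $t$ on the compact interval $[1,2]$, so that the supremum over $t$ can be traded for an $L^2_t$ (Sobolev in $t$) bound — the classical $g$-function trick. Concretely, write $A_t f = |B(0,t)|^{-1}\mathbf 1_{B(0,t)} * f$, so $M_\alpha^* f \le \sup_{1\le t\le 2} |A_t f|$ (the factor $t^\alpha$ is harmless on $[1,2]$). By the fundamental theorem of calculus and Cauchy–Schwarz, for each $x$,
\[
\sup_{1\le t\le 2}|A_t f(x)|^2 \lesssim \int_1^2 |A_t f(x)|^2\,dt + \int_1^2 |A_t f(x)|\,|\partial_t A_t f(x)|\,dt,
\]
so that $\|M_\alpha^* g\|_{L^p}$ is controlled by $\| (\int_1^2 |A_t g|^2 dt)^{1/2}\|_{L^p}$ plus a mixed term handled the same way via $\partial_t A_t$. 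Then I would apply this with $g = D^h f$ and use that $D^h$ commutes with the $A_t$.

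The heart of the matter is therefore the single-frequency estimate: decompose $f=\sum_j f_j$ into Littlewood–Paley pieces and estimate $\|(\int_1^2 |A_t f_j|^2\,dt)^{1/2}\|_{L^p}$. Passing to Fourier side, $A_t$ has multiplier $m(t\xi)$ where $m(\xi)=\widehat{\mathbf 1_{B(0,1)}}(\xi)/|B(0,1)|$, and the stationary phase / Bessel asymptotics give $|m(\xi)|\lesssim (1+|\xi|)^{-(n+1)/2}$ together with the same decay for $\partial_t$ of $m(t\xi)$ (up to a harmless factor $|\xi|$). Hence on the piece $f_j$ with $|\xi|\sim 2^j$, both $\sup_{1\le t\le 2}|A_t f_j|$ and the $g$-function over $[1,2]$ are, by Plancherel in $t$ and Sobolev embedding $L^\infty_t([1,2])\hookrightarrow H^1_t$, bounded on $L^2$ by $C\,2^{-j(n-1)/2}\|f_j\|_{L^2}$ for $j\ge 0$ and trivially by $C\|f_j\|_{L^2}$ for $j<0$. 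I would then interpolate this $L^2$ bound with the trivial $L^1\to L^1$ (or $L^\infty\to L^\infty$) bound $\|A_t f_j\|_{L^p}\lesssim \|f_j\|_{L^p}$ to obtain
\[
\Big\| \Big(\int_1^2 |A_t f_j|^2\,dt\Big)^{1/2}\Big\|_{L^p} \lesssim 2^{-j\beta(p)}\|f_j\|_{L^p}, \qquad \beta(p) = (n-1)\big(\tfrac1p-\tfrac12\big)\cdot\ldots
\]
for $j\ge 0$ (with the precise exponent read off from the interpolation), while for $j<0$ one gets $\lesssim \|f_j\|_{L^p}$. The point is to check that at $p=n/(n-\alpha)$ the resulting gain $\beta(p)$ is strictly larger than $1-\alpha$, i.e.\ $(n-1)(\tfrac1p-\tfrac12) > 1-\alpha$ when $p=n/(n-\alpha)$ and $0<\alpha<1$, $n\ge 2$ — a direct computation. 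Summing over $j$ in $\ell^1$ and matching weights $2^{j(1-\alpha)}$ against the $\dot B^{1-\alpha}_{p,1}$ norm gives
\[
\Big\|\Big(\int_1^2 |A_t f|^2\,dt\Big)^{1/2}\Big\|_{L^p} \le \sum_j \Big\|\Big(\int_1^2|A_t f_j|^2\,dt\Big)^{1/2}\Big\|_{L^p} \lesssim \sum_j 2^{-j\beta(p)}\|f_j\|_{L^p} \lesssim \|f\|_{\dot B^{1-\alpha}_{p,1}},
\]
where the low frequencies $j<0$ are summed using the positive exponent $1-\alpha>0$ in the Besov weight.

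Finally, I would run the same argument verbatim for the derivative term $\partial_t A_t$, whose multiplier $|\xi|\,(\partial m)(t\xi)$ enjoys decay $(1+|\xi|)^{-(n-1)/2}$, i.e.\ exactly one derivative worse, but this is still summable at the same $p$ because we built in a strict gain $\beta(p)>1-\alpha$ rather than an equality; alternatively one absorbs the loss by noting $(n-1)(1/p-1/2) - 1 > -\alpha > \alpha - 1$... — in any case the interval $0<\alpha<1$ leaves room. Combining the two pieces via the $g$-function bound above and replacing $f$ by $D^h f$ (using $\|D^h f\|_{\dot B^{1-\alpha}_{p,1}}\lesssim \|f\|_{\dot B^{1-\alpha}_{p,1}}$ uniformly in $h$, since $D^h$ has multiplier $(e^{2\pi i h\cdot\xi}-1)/|h|$ of size $\lesssim \min(|\xi|,|h|^{-1})\lesssim |\xi|$, matching the $\dot B^{1-\alpha}$ to $\dot B^{-\alpha}$... — more carefully, $D^h$ is bounded on each $\dot B^s_{p,1}$ uniformly in $h$), we conclude $\|M_\alpha^* D^h f\|_{L^p}\lesssim \|f\|_{\dot B^{1-\alpha}_{p,1}}$. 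The main obstacle is the bookkeeping in the interpolation step: one must verify the precise numerology $(n-1)(1/p-1/2)>1-\alpha$ at $p=n/(n-\alpha)$ and confirm that the extra derivative lost in the $\partial_t A_t$ term does not destroy $\ell^1$-summability — this is where $n\ge 2$ and $\alpha<1$ are genuinely used.
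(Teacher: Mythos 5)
Your overall strategy is the same as the paper's: Littlewood--Paley decomposition, the fundamental-theorem-of-calculus/$g$-function trick to control the supremum over $t\in[1,2]$, Bessel decay of $\widehat{1_{B(0,1)}}$ for an $L^2$ estimate, interpolation with a trivial $L^1$-type bound, and summation of the frequency pieces against the $\dot B^{1-\alpha}_{p,1}$ weight. (The paper interpolates the sup-operator between $L^2$ and weak $L^1$, you interpolate the square functions between $L^2$ and $L^1$; that difference is immaterial.)

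However, the step you yourself identify as ``the main obstacle'' --- the interpolation numerology --- is wrong as written, and the error is not cosmetic. First, the inequality you propose to verify, $(n-1)(\tfrac1p-\tfrac12)>1-\alpha$ at $p=n/(n-\alpha)$, is false (for $n=2$, $\alpha=\tfrac12$ it reads $\tfrac14>\tfrac12$), and it is not the relevant condition anyway: interpolating an $L^2$ gain $2^{-j\sigma}$ with an $L^1$ bound $O(1)$ yields $2^{-2j\sigma/p'}$, and with $1/p'=\alpha/n$ one needs $2\sigma/p'\geq \alpha$, i.e.\ $\sigma\geq n/2$, to absorb the full derivative carried by $D^h$ down to the weight $2^{j(1-\alpha)}$. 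Second, there is no strict gain at this exponent: with $\sigma=n/2$ the interpolated gain is exactly $2^{-j\alpha}$ and the $j$-sum closes only because the Besov norm is the $\ell^1$ sum $\sum_j 2^{j(1-\alpha)}\|f_j\|_{L^p}$ --- this exactness is why the theorem is stated with $\dot B^{1-\alpha}_{p,1}$ and why the paper must later manufacture an extra $\varepsilon$ of decay (smooth kernels, lacunary radii) to pass to the full supremum. Consequently your plan to treat the $\partial_t A_t$ term ``verbatim'' and absorb its one-derivative loss using slack fails: that term alone has $L^2$ decay only $2^{-j(n-1)/2}$, which interpolates to $2^{-j(n-1)\alpha/n}<2^{-j\alpha}$, and the high-frequency sum diverges against the weight $2^{j(1-\alpha)}$. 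The fix, which is what the paper does in its Proposition on the $L^2$ bound, is to keep the Cauchy--Schwarz geometric mean of the two $g$-functions: the multiplier of $A_t$ decays like $2^{-j(n+1)/2}$ and that of $\partial_t A_t$ like $2^{-j(n-1)/2}$, and only their geometric mean $2^{-jn/2}$ reaches the threshold $\sigma=n/2$. With that correction your argument goes through; without it, it does not.
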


By the discussion in $\S\S$\ref{subsec:finitedifference}, Theorem \ref{thm:model_rough} implies an $L^{p}$ bound for the gradient of $M_{\alpha}^{*}$. It will be shown in $\S\S$\ref{subsec:extensionFull} how the proof of the above estimate gives Theorem \ref{thm:main solid balls} for sightly smoother versions of the fractional maximal function, such as its lacunary version or maximal functions of convolution type with smooth kernels.

\begin{proof}
Write, for $f \in \mathcal{S}$,
\[M_{\alpha}^{*} (D^{h}f)(x) = \sup_{1 \leq t \leq 2} | \mathcal{F}^{-1}( (t|\xi|)^{\alpha} \widehat{1_{B(0,1)}}(t \xi)  \mathcal{F}( T_h (-\Delta)^{(1-\alpha)/2} f ) )| \]
where $T_h$ is the operator defined by 
\begin{equation}\label{Th definition}
\widehat{T_h g}(\xi) = \frac{e^{i \xi \cdot h} - 1}{|\xi||h|}\widehat{g}(\xi)=: a_h( \xi) \widehat{g}(\xi).
\end{equation}
Observe that $T_h$ is a bounded operator on $L^p$ uniformly in $h\in \mathbb{R}^{n}$ for all $1<p<\infty$ by the Mikhlin--Hörmander multiplier theorem (see, for instance \cite[Theorem 8.10]{Duo}); it is clear that
$$
|\partial^\gamma a_{h}(\xi)| \lesssim |\xi|^{-|\gamma|} \quad \quad \text{for all multi-indexes } \gamma \in \N_0^n
$$
with implicit constant independent of $h \in \R^n$. Thus, the operator $T_h$ plays no role in determining the range of boundedness for $M^*_\alpha D^h$.

Let $m(\xi) = |\xi|^{\alpha} \widehat{1_{B(0,1)}}( \xi)$ and $m_t(\xi):=m(t\xi)$ for all $t>0$. 
For each $j \in \Z$, let $f_j=\check{\psi}_j \ast f$ denote the Littlewood-Paley piece of $f$ around the frequency $2^{j}$ as in $\S\S$\ref{subsec:besov}. Assume momentarily that the following holds.

\begin{proposition}
\label{prop:Lp}
Let $g \in \mathcal{S}$. Then for $p = n/(n-\alpha)$ and $0 <\alpha < n/2$,
\[\norm{ \sup_{1 \leq t \leq 2} |\mathcal{F}^{-1}(m_t \widehat{g}_j ) |}_{L^{p}} \lesssim  (2^{j \alpha} 1_{\{j  \leq 0\}} + 1_{\{j > 0\}} ) \norm{g_j}_{L^{p}}. \]
\end{proposition}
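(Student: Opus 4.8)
The plan is to analyze the maximal multiplier operator $g \mapsto \sup_{1 \le t \le 2}|\mathcal F^{-1}(m_t \widehat g_j)|$ separately in the three regimes $j > 0$ (high frequency), $j \le 0$ (low frequency), and to treat the low-frequency case itself by splitting into $j$ close to $0$ and $j \to -\infty$. First I would record the basic asymptotics of $m(\xi) = |\xi|^\alpha \widehat{1_{B(0,1)}}(\xi)$. Since $\widehat{1_{B(0,1)}}$ is a Bessel-type function, $\widehat{1_{B(0,1)}}(\xi) = O(1)$ near the origin and $|\partial^\gamma \widehat{1_{B(0,1)}}(\xi)| \lesssim |\xi|^{-(n+1)/2 - |\gamma|}$ for $|\xi| \gtrsim 1$. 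Hence on the annulus $|\xi| \sim 2^j$ the symbol $m_t$ behaves like $2^{j\alpha}$ when $j \le 0$, and like $2^{j(\alpha - (n+1)/2)}$ (with good derivative bounds) when $j > 0$, uniformly in $t \in [1,2]$. To pass from a fixed-$t$ estimate to the supremum over $t$, I would use the standard Sobolev-embedding-in-$t$ trick: $\sup_{1\le t\le 2}|F(t,x)| \lesssim \|F(\cdot,x)\|_{L^2_t([1,2])} + \|\partial_t F(\cdot,x)\|_{L^2_t([1,2])}$, so that after taking $L^p$ norms in $x$ and (for $p \le 2$) using Minkowski's integral inequality to exchange $L^p_x$ and $L^2_t$, the problem reduces to fixed-$t$ bounds for $\mathcal F^{-1}(m_t\widehat g_j)$ and for $\mathcal F^{-1}(t\,\partial_t[m_t]\,\widehat g_j) = \mathcal F^{-1}((\xi\cdot\nabla m)(t\xi)\widehat g_j)$, whose symbol has the same size and smoothness as $m_t$.

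For the high-frequency piece $j > 0$ I would argue as follows: $m_t(\xi)\psi(2^{-j}\xi)$ is, up to the scalar $2^{j(\alpha - (n+1)/2)}$, a normalized bump adapted to $|\xi|\sim 2^j$ whose inverse Fourier transform is an $L^1$-normalized Schwartz bump; convolution with it is bounded on every $L^p$, so the fixed-$t$ operator norm is $\lesssim 2^{j(\alpha-(n+1)/2)}$, which since $\alpha < n/2 \le (n+1)/2$ is summable and in particular bounded by $1$ for $j>0$. That already gives the $1_{\{j>0\}}$ factor (indeed a better, geometrically decaying, bound). For the low-frequency piece with $j$ in a bounded range around $0$, say $-10 \le j \le 0$, the symbol is a smooth compactly supported bump (away from the origin) times $2^{j\alpha}\sim 1$, and again convolution with its inverse transform is $L^p$-bounded, giving the bound $2^{j\alpha}\|g_j\|_{L^p}$ trivially. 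The only genuinely delicate regime is $j \to -\infty$: here $\widehat g_j$ is supported in $|\xi|\sim 2^j$ with $2^j$ small, $m_t(\xi)\sim (t|\xi|)^\alpha$ there, and $m_t$ is \emph{not} smooth at the origin — it behaves like $|\xi|^\alpha$.

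The main obstacle, then, is the $j \to -\infty$ regime, and this is exactly where the exponent $p = n/(n-\alpha)$ and the restriction $\alpha < n/2$ enter. The idea is that on $|\xi| \sim 2^j$ one has $m_t(\xi) = (t|\xi|)^\alpha \cdot \widehat{1_{B(0,1)}}(t\xi)$ with $\widehat{1_{B(0,1)}}(t\xi) = 1 + O(|t\xi|^2)$ smooth near $0$, so modulo harmless smooth factors the operator is essentially $g_j \mapsto (-\Delta)^{\alpha/2} g_j$ localized to frequency $2^j$, whose $L^p$ norm is comparable to $2^{j\alpha}\|g_j\|_{L^p}$ — but we want to \emph{gain} an $L^p \to L^p$ bound of size $2^{j\alpha}$, not $L^p \to L^q$. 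Concretely I would write $m_t(\xi)\psi(2^{-j}\xi) = 2^{j\alpha}\, b_t(2^{-j}\xi)$ where $b_t(\eta) = |\eta|^\alpha \widehat{1_{B(0,1)}}(2^j t\eta)\psi(\eta)$ is, uniformly in $j \le 0$ and $t\in[1,2]$, a fixed smooth function supported in $\tfrac12 \le |\eta| \le 2$ with all derivatives bounded (the argument $2^j t \eta$ of $\widehat{1_{B(0,1)}}$ is small, so no singularity or decay issue arises). Therefore $\mathcal F^{-1}(m_t\widehat g_j) = 2^{j\alpha}\,(K^{(j,t)}_{2^{-j}} * g_j)$ with $K^{(j,t)} = b_t^\vee$ an $L^1$-bounded (uniformly) Schwartz kernel and $K^{(j,t)}_{2^{-j}}(x) = 2^{jn}K^{(j,t)}(2^j x)$ its $L^1$-normalized dilate; convolution against it is bounded on $L^p$ uniformly, yielding the claimed bound $2^{j\alpha}\|g_j\|_{L^p}$. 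Combining the three regimes and summing (trivially, since each $j$ is handled individually) gives the proposition; I would double-check only that the Sobolev-in-$t$ step and the Minkowski exchange are licit for $p = n/(n-\alpha) \in (1,2)$ when $0 < \alpha < n/2$, which holds since then $p < 2$.
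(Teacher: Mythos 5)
Your overall architecture (Sobolev embedding in $t$ plus fixed--$t$ multiplier bounds, with a rescaling argument for $j\to-\infty$) is reasonable, and your treatment of the low-frequency regime $j\le 0$ is correct. The gap is in the high-frequency regime $j>0$, and it is a genuine one. You claim $|\partial^\gamma \widehat{1_{B(0,1)}}(\xi)|\lesssim |\xi|^{-(n+1)/2-|\gamma|}$ for $|\xi|\gtrsim 1$, so that $m_t\psi(2^{-j}\cdot)$ is $2^{j(\alpha-(n+1)/2)}$ times a normalized bump whose inverse Fourier transform is an $L^1$-normalized kernel. This is false: $\widehat{1_{B(0,1)}}(\xi)=|2\pi\xi|^{-n/2}J_{n/2}(2\pi|\xi|)$ oscillates like $a_\pm(\xi)e^{\pm 2\pi i|\xi|}$ with $a_\pm$ of order $-(n+1)/2$, and radial derivatives hitting the phase gain nothing. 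Consequently the kernel of $\widehat{1_{B(0,1)}}(t\cdot)\psi(2^{-j}\cdot)$, namely $t^{-n}1_{B(0,t)}*\check\psi_j$, is not a bump at the origin but concentrates on a $2^{-j}$-neighbourhood of the sphere $|x|=t$; its $L^1$ norm is $\sim 2^{-j}$, vastly larger than $2^{-j(n+1)/2}$ when $n\ge 2$. So the fixed-$t$ $L^p$ operator norm is \emph{not} $\lesssim 2^{j(\alpha-(n+1)/2)}$ except at $p=2$. If your claim were true, the same argument would bound the operator on $L^1$ and for all $\alpha<(n+1)/2$, which would in particular make the spherical-type threshold phenomena (and the hypotheses $p=n/(n-\alpha)$, $\alpha<n/2$) vacuous. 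A second, related error: you assert that $t\partial_t[m_t]$ "has the same size and smoothness as $m_t$"; in fact $\xi\cdot\nabla m$ is larger by a factor $|\xi|$ at high frequency, because the derivative falls on the Bessel oscillation. The paper records exactly this: $\|m\psi_j\|_{L^\infty}\lesssim 2^{j(\alpha-\frac{n+1}{2})}$ but $\|\tilde m\psi_j\|_{L^\infty}\lesssim 2^{j(\alpha-\frac{n-1}{2})}$ for $j>0$, and this loss is why the supremum in $t$ is genuinely costly.

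The paper's route avoids both issues: it proves the fixed-frequency $L^2$ bound $2^{j(\alpha-n/2)}1_{\{j>0\}}+2^{j\alpha}1_{\{j\le 0\}}$ by the fundamental-theorem-of-calculus/Cauchy--Schwarz square-function trick together with the correct Bessel asymptotics (the extra $2^{j/2}$ relative to the pointwise multiplier bound being the price of the supremum), proves a weak $(1,1)$ bound with constant $2^{j\alpha}$ by dominating the whole maximal operator pointwise by $M((-\Delta)^{\alpha/2}g)$, and interpolates; at $p=n/(n-\alpha)$ with $\alpha<n/2$ the exponents cancel exactly to give the $O(1)$ constant for $j>0$. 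Your plan could be repaired by replacing the false "$L^1$-normalized bump" claim with the true kernel bounds $\|1_{B(0,1)}*\check\psi_j\|_{L^1}\sim 2^{-j}$ and $\|(\xi\cdot\nabla\widehat{1_{B(0,1)}})^\vee*\check\psi_j\|_{L^1}\lesssim 1$, interpolating these with the $L^2$ multiplier bounds before applying your Sobolev-in-$t$ step; the condition $\alpha<n/2$ (equivalently $p<2$) is then precisely what makes the resulting exponent $2^{j(1/2-1/p)}$ decay. As written, however, the high-frequency case is not proved.
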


Then the proof may be concluded as follows. Decomposing the function $f$ into frequency localised pieces $f_j$ and applying Proposition \ref{prop:Lp} to the function $g = T_h (-\Delta)^{(1-\alpha)/2} f$ one has
\begin{align}
\norm{\sup_{1\leq t \leq 2} | \mathcal{F}^{-1}(m_t \widehat{g} ) |  }_{L^{p}} &  \leq \sum_{j \in \Z}   \| \sup_{1\leq t \leq 2} |\mathcal{F}^{-1}(m_t \widehat{g}_{j}) | \|_{L^p} \notag \\
& \lesssim \sum_{j \in \Z} (2^{j \alpha } 1_{\{j \leq 0\}} + 1_{\{ j > 0 \}} )  \| g_{j} \|_{L^p} \notag \\
& \leq  \sum_{j \in \Z} 2^{j(1-\alpha)} \| f_{j} \|_{L^p}\sim \| f \|_{\dot{B}_{p,1}^{1-\alpha}}, \label{eq:Besov_bad}
\end{align}
where the last step follows from the $L^{p}$ boundedness of $T_h$ and Young's convolution inequality.

\begin{remark}
By Bernstein's inequality, $2^{j(1-\alpha)}\| f_j \|_{L^p} \lesssim 2^j \| f_j \|_{L^1}$, so one may further bound $\| f \|_{\dot{B}^{1-\alpha}_{p,1}} \lesssim \| f \|_{\dot{B}_{1,1}^1}$ in \eqref{eq:Besov_bad}.
\end{remark}

It remains to prove Proposition \ref{prop:Lp}. This is done by interpolating an $L^2$ bound with an $L^1-L^{1,\infty}$ bound as in the proof of the spherical maximal function theorem that can be found in the textbooks \cite[Chapter XI, $\S$3.3]{bigStein} or \cite[Chapter 5.5]{GrafakosClassical2014}. Writing 
\[
\mathcal{F}^{-1}(m_t \widehat{g}_j ) = t^{\alpha}  \mathcal{F}^{-1}(   \widehat{1_{B(0,1)}}(t \xi) (|\xi|^{\alpha}\widehat{g}_{j} ) ),
\]
it is clear that 
\[
\sup_{1 \leq t \leq 2} |\mathcal{F}^{-1}(m_t \widehat{g})| \lesssim  \sup_{1 \leq t \leq 2} |   t^{-n} 1_{B(0,t)} * ((-\Delta)^{\alpha/2} g) | \leq M((-\Delta)^{\alpha/2} g)
\]
where $M$ is the Hardy--Littlewood maximal function. Bounds on $M$ and Young's convolution inequality then imply
\begin{proposition}
Let $g \in \mathcal{S}$. Then
\begin{equation*}
\norm{ \sup_{1 \leq t \leq 2}| \mathcal{F}^{-1}(m_t \widehat{g}_j )| }_{L^{1,\infty}} \lesssim 2^{ j \alpha } \norm{g_j}_{L^{1}} .
\end{equation*}
\end{proposition}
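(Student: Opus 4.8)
The plan is to reduce the weak-type estimate to the weak-type $(1,1)$ bound for the Hardy--Littlewood maximal operator, combined with a Bernstein-type inequality for $(-\Delta)^{\alpha/2}$ restricted to a single Littlewood--Paley block. First, I would apply the pointwise domination already recorded above, now with $g$ replaced by the single piece $g_j$: since $\widehat{1_{B(0,1)}}(t\xi)$ is the Fourier transform of $t^{-n}1_{B(0,t)}$, which is a constant multiple of an $L^1$-normalised ball average, and since $t^\alpha \leq 2^\alpha$ on $[1,2]$, one obtains
\[
\sup_{1 \leq t \leq 2}\bigl|\mathcal{F}^{-1}(m_t \widehat{g}_j)\bigr| \lesssim \sup_{1 \leq t \leq 2} \bigl| t^{-n} 1_{B(0,t)} * ((-\Delta)^{\alpha/2} g_j) \bigr| \lesssim M\bigl((-\Delta)^{\alpha/2} g_j\bigr).
\]
Taking $L^{1,\infty}$ norms and invoking the classical weak-type $(1,1)$ inequality for $M$ then gives
\[
\Bigl\| \sup_{1 \leq t \leq 2}\bigl|\mathcal{F}^{-1}(m_t \widehat{g}_j)\bigr| \Bigr\|_{L^{1,\infty}} \lesssim \bigl\| (-\Delta)^{\alpha/2} g_j \bigr\|_{L^1}.
\]

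It then remains to prove the Bernstein-type estimate $\| (-\Delta)^{\alpha/2} g_j \|_{L^1} \lesssim 2^{j\alpha} \| g_j \|_{L^1}$. The plan here is to fix a fattened cut-off $\widetilde{\psi} \in C^\infty_c(\{2^{-2} < |\xi| < 2^2\})$ with $\widetilde{\psi} \equiv 1$ on $\supp\psi$, so that $\widehat{g}_j = \widetilde{\psi}(2^{-j}\cdot)\,\widehat{g}_j$ and hence $(-\Delta)^{\alpha/2} g_j = K_j * g_j$, where $\widehat{K_j}(\xi) = |\xi|^\alpha \widetilde{\psi}(2^{-j}\xi)$. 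Writing $K$ for the function with $\widehat{K}(\xi) = |\xi|^\alpha \widetilde{\psi}(\xi)$, the cut-off removes the singularity of $|\xi|^\alpha$ at the origin, so $\widehat{K}$ is smooth and compactly supported, whence $K \in \mathcal{S}$ and in particular $\|K\|_{L^1} < \infty$. A change of variables in the Fourier integral gives the scaling identity $K_j(x) = 2^{j(n+\alpha)} K(2^j x)$, so that $\|K_j\|_{L^1} = 2^{j\alpha}\|K\|_{L^1}$; Young's convolution inequality then yields $\|(-\Delta)^{\alpha/2} g_j\|_{L^1} \leq \|K_j\|_{L^1}\|g_j\|_{L^1} \lesssim 2^{j\alpha}\|g_j\|_{L^1}$, which together with the previous display completes the proof.

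Since all of the ingredients are standard, I do not expect any serious obstacle. The only point requiring a little care is the Bernstein step: one must first fatten the Littlewood--Paley multiplier so as to keep the homogeneous symbol $|\xi|^\alpha$ away from its singularity at the origin before applying the scaling identity, and one should note that the resulting kernel $K$ is Schwartz precisely because $\widehat{K}$ is a genuine smooth, compactly supported bump after this localisation.
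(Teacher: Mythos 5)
Your proposal is correct and follows essentially the same route as the paper: pointwise domination of the single-scale operator by $M((-\Delta)^{\alpha/2}g_j)$, the weak-type $(1,1)$ bound for the Hardy--Littlewood maximal function, and a Bernstein/Young step giving $\|(-\Delta)^{\alpha/2}g_j\|_{L^1}\lesssim 2^{j\alpha}\|g_j\|_{L^1}$. You merely spell out the last step (fattened cut-off and kernel scaling) in more detail than the paper does.
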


The $L^{2}$ estimate follows by estimating the Fourier decay of $m$ after an application of a Sobolev embedding. This is the part of the proof that allows to take advantage of better symbols $m$ later in $\S\S$\ref{sec:smooth} so we write the proof in detail.

\begin{proposition}
\label{prop:L2}
Let $g \in \mathcal{S}$. Then
$$
\norm{ \sup_{1 \leq t \leq 2} |\mathcal{F}^{-1}( m_t \widehat{g}_j ) |}_{L^{2}} 
\lesssim  ( 2^{ j \alpha } 1_{\{j \leq 0\}}  + 2^{ j (- \frac{n}{2} + \alpha )} 1_{\{ j > 0\}} ) \norm{g_j}_{L^{2}}. 
$$
\end{proposition}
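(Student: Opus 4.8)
The plan is to follow the standard $g$-function / $TT^*$-free route used for the spherical maximal function, reducing the $L^2$ bound on the linearised maximal operator to a pointwise estimate on the multiplier $m_t$ together with an elementary Sobolev-in-$t$ argument. First I would recall the classical trick for passing from $\sup_{1\le t\le 2}$ to an $L^2$ bound without losing the supremum: for a smooth function $F(t)$ on $[1,2]$ one has the pointwise bound
\[
\sup_{1\le t\le 2} |F(t)|^2 \lesssim \int_1^2 |F(t)|^2 \dt + \Big( \int_1^2 |F(t)|^2 \dt \Big)^{1/2} \Big( \int_1^2 |F'(t)|^2 \dt \Big)^{1/2},
\]
which, after applying Fubini and Plancherel in $x$, reduces the claim to controlling $\int_1^2 \int |m_t(\xi)|^2 |\widehat{g_j}(\xi)|^2 \, d\xi \dt$ and the analogous integral with $\partial_t m_t$ in place of $m_t$. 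Since $g_j$ is frequency-localised to $|\xi|\sim 2^j$, on that annulus $|m_t(\xi)| \le t^\alpha |\xi|^\alpha |\widehat{1_{B(0,1)}}(t\xi)| \lesssim 2^{j\alpha} |\widehat{1_{B(0,1)}}(t\xi)|$, and $\partial_t m_t(\xi) = |\xi| \, (\nabla m)(t\xi)\cdot (\xi/|\xi|)$ is governed by $|\xi|\cdot|\xi|^{\alpha-1}$-type terms coming from the derivative of $|\xi|^\alpha$ plus $|\xi|^\alpha$ times the derivative of the Bessel-type factor, so it satisfies $|\partial_t m_t(\xi)| \lesssim 2^{j\alpha}\big(|\widehat{1_{B(0,1)}}(t\xi)| + |\xi|\,|\nabla\widehat{1_{B(0,1)}}(t\xi)|\big)$ on the annulus.

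Next I would invoke the classical decay of the Fourier transform of the ball indicator: $\widehat{1_{B(0,1)}}$ is (up to constants and lower-order terms) a Bessel function, so $|\widehat{1_{B(0,1)}}(\eta)| \lesssim (1+|\eta|)^{-(n+1)/2}$, and the same decay rate, with one fewer power, persists for its gradient, i.e. $|\nabla\widehat{1_{B(0,1)}}(\eta)| \lesssim (1+|\eta|)^{-(n+1)/2}$ as well (in fact $|\eta|\,|\nabla\widehat{1_{B(0,1)}}(\eta)| \lesssim (1+|\eta|)^{-(n-1)/2}$, but even the cruder bound suffices here since we are in $L^2$ with $n\ge 2$ so $(n+1)/2 > 1/2$). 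Plugging these into the two $t$-integrals, on the annulus $|\xi|\sim 2^j$ with $1\le t\le 2$ one gets $|m_t(\xi)|^2 + |\partial_t m_t(\xi)|^2 \lesssim 2^{2j\alpha}(1+2^j)^{-(n+1)}\big(1 + 2^{2j}\big)$, and more carefully $|\partial_t m_t(\xi)|$ carries the extra factor $|\xi|\sim 2^j$ only against the Bessel decay $(1+2^j)^{-(n+1)/2}$, which is harmless for $j\le 0$ and gives an overall $2^{2j\alpha}2^{2j}2^{-j(n+1)} = 2^{j(2\alpha+1-n)}\le 2^{j(-n+2\alpha)}$-type factor for $j>0$. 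Combining, the pointwise multiplier bound on the annulus is $\lesssim 2^{2j\alpha}1_{\{j\le 0\}} + 2^{j(2\alpha - n)}1_{\{j>0\}}$ (the worst case being the $\partial_t$ term for $j>0$, which after taking square roots gives exactly $2^{j(\alpha - n/2)}$), so by Plancherel $\big\|\sup_{1\le t\le 2}|\mathcal F^{-1}(m_t\widehat{g_j})|\big\|_{L^2}^2 \lesssim (2^{2j\alpha}1_{\{j\le 0\}} + 2^{j(2\alpha-n)}1_{\{j>0\}})\|g_j\|_{L^2}^2$, which is the claim.

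The main technical point — really the only place one must be slightly careful — is bookkeeping the derivative $\partial_t m_t$: one must check that differentiating in $t$ costs at most one power of $|\xi|\sim 2^j$ and that this extra power is still absorbed by the $(1+|\xi|)^{-(n+1)/2}$ Bessel decay for the $j>0$ range, which is exactly where the hypothesis $\alpha < n/2$ enters (it guarantees $2\alpha - n < 0$ so the high-frequency sum, when later combined across $j$, behaves), and where $n\ge 2$ is used to have genuine decay of $\widehat{1_{B(0,1)}}$. For $j\le 0$ there is no decay gain to exploit, but there is also nothing to gain: $|\widehat{1_{B(0,1)}}(t\xi)|\le |B(0,1)|$ and $|\nabla\widehat{1_{B(0,1)}}(t\xi)|\lesssim 1$ trivially, the factor $|\xi|\sim 2^j\le 1$ in $\partial_t m_t$ is harmless, and everything is controlled by $2^{2j\alpha}$ from the $|\xi|^\alpha$ factor alone. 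I would also note, as the paper signals, that this argument only uses the pointwise size of $m$ and $\nabla m$ and their Bessel-type decay, so it transfers verbatim to the smoother symbols appearing later in $\S\S\ref{sec:smooth}$, with the decay exponent $(n+1)/2$ replaced by whatever (better) decay the smooth kernel provides.
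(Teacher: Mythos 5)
Your proof is correct and follows essentially the same route as the paper: a fundamental-theorem-of-calculus/Sobolev embedding in $t$, Cauchy--Schwarz, Plancherel, and the Bessel decay of $\widehat{1_{B(0,1)}}$ and its derivative (your $\partial_t m_t$ is the paper's $\tilde m_t/t$ with $\tilde m(\xi)=\xi\cdot\nabla m(\xi)$). One bookkeeping slip in the middle: for $j>0$ the sum $|m_t(\xi)|^2+|\partial_t m_t(\xi)|^2$ is only $O(2^{j(2\alpha+1-n)})$, and your inequality $2^{j(2\alpha+1-n)}\le 2^{j(2\alpha-n)}$ is backwards for $j>0$; the exponent $-\tfrac n2+\alpha$ comes exclusively from the geometric-mean cross term $\bigl(\int_1^2|F|^2\bigr)^{1/2}\bigl(\int_1^2|F'|^2\bigr)^{1/2}\lesssim 2^{j(\alpha-\frac{n+1}{2})}\,2^{j(\alpha-\frac{n-1}{2})}\,\|g_j\|_{L^2}^2=2^{j(2\alpha-n)}\|g_j\|_{L^2}^2$, which your first display does contain and which is essential here, since the weaker exponent $\alpha-\tfrac n2+\tfrac12$ would not interpolate with the $L^1\to L^{1,\infty}$ bound to give the $O(1)$ high-frequency estimate required in Proposition \ref{prop:Lp}.
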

\begin{proof}
Let $\tilde{m}(\xi) = \xi \cdot \nabla m(\xi)$ and denote by $T_m$ and $T_{\tilde{m}}$ the operators associated to the multipliers $m$ and $\tilde{m}$. By the fundamental theorem of calculus,
\begin{align}
\sup_{1 \leq t \leq 2} |T_{m_t} g_j| 
	&\leq |T_{m} g_j| +  2 \left( \int_{1}^{2} |T_{ m_t} g_j | |T_{\tilde{m}_t} g_j | \frac{dt}{t} \right)^{1/2} \notag \\
	&\leq |T_{m} g_j|  + 2 \left(  \int_{1}^{2} |T_{ m_t} g_j |^{2} \frac{dt}{t} \right)^{1/4} \left(  \int_{1}^{2} |T_{\tilde{m}_t} g_j |^{2} \frac{dt}{t} \right)^{1/4} . \label{eq:sobolev embedding L2}
\end{align}
Taking $L^2$-norm in the above expression, an application of the Cauchy--Schwarz inequality and Fubini's theorem reduces the problem to compute the $L^\infty$ norm of $m\psi_j$ and $\tilde{m}\psi_j$.

Recall that $\widehat{1_{B(0,1)}}(\xi) = |2\pi \xi|^{-n/2} J_{n/2}(2 \pi |\xi|)$, where $J_{n/2}$ denotes the Bessel function of order $n/2$, and 
\[J_{n/2}(r) \lesssim  r^{n/2} 1_{\{r \leq 1\}} + r^{-1/2} 1_{\{r > 1\}};\]
see, for instance, \cite[Appendix B]{GrafakosClassical2014} for further details. This immediately yields
\begin{equation}
\label{eq:lacunary_single}
\norm{m \psi_j }_{L^{\infty}} \lesssim 2^{j\alpha} 1_{\{j \leq 0\}} +  2^{j( -\frac{n+1}{2} + \alpha )} 1_{\{j >0\}}.
\end{equation}
Concerning $\tilde{m}$, the relation
$$
\frac{d}{dr}[r^{-n/2} J_{n/2}(r)]=-r^{-n/2}J_{n/2 +1}(r)
$$
and a similar analysis to the one carried above leads to  
$$\norm{\tilde{m} \psi_j}_{L^{\infty}} \lesssim 2^{j\alpha} 1_{\{j \leq 0\}} + 2^{j( -\frac{n-1}{2} + \alpha )}1_{\{j >0\}}.$$
Putting both estimates together in \eqref{eq:sobolev embedding L2} concludes the proof.
\end{proof}

Proposition \ref{prop:Lp} now follows by interpolation, and the proof of the model case is complete.
\end{proof}

\subsection{Extension to the full supremum}\label{subsec:extensionFull}
%
From now on, we redefine $m$ to be Fourier transform of an integrable function smoother than $1_{B(0,1)}$. Momentarily assume $m$ satisfies
\begin{equation}\label{assumption extra decay}
\norm{ \sup_{1 \leq t \leq 2} | (m_t \widehat{g}_j )^\vee |}_{L^{p}} \lesssim  (2^{j \alpha} 1_{\{j  \leq 0\}} + 2^{-j\varepsilon} 1_{\{j > 0\}} ) \norm{g_j}_{L^{p}}
\end{equation}
with $p = n/(n-\alpha)$,
which we next show to be enough to conclude a bound as in Theorem \ref{thm:main solid balls}. The proof of \eqref{assumption extra decay} is postponed to $\S\S$\ref{sec:smooth}.

Inequality \eqref{assumption extra decay} rescales as
\begin{equation}\label{eq:rescaled}
\norm{ \sup_{2^{-k} \leq t \leq 2^{-k+1}} | (m_t \widehat{g}_{j+k} ) ^\vee|}_{L^{p}} \lesssim  (2^{j \alpha} 1_{\{j \leq 0\}} +  2^{-j\varepsilon}  1_{\{j  > 0\}}) \norm{g_{j+k}}_{L^{p}}. 
\end{equation}
In order to use this bound, break the full supremum over all possible scales and use the embedding $\ell^p \subseteq \ell^\infty$,
$$
\sup_{t >0} |  (m_t \widehat{g} )^\vee |  = \sup_{k \in \Z} \sup_{2^{-k} \leq t \leq 2^{-k+1}} | (m_t \widehat{g} )^\vee | \leq \Big( \sum_{k \in \Z}  \sup_{2^{-k} \leq t \leq 2^{-k+1}} | (m_t \widehat{g})^\vee |^p  \Big)^{1/p}.
$$
Taking $L^p$ norm and using \eqref{eq:rescaled}, we see
\begin{equation*}
\norm{\sup_{t >0} |  (m_t \widehat{g} )^\vee|  }_{L^{p}} \lesssim \sum_{j \in \Z} (2^{ j \alpha } 1_{\{j \leq 0\}} + 2^{- j \varepsilon} 1_{\{ j > 0 \}} ) \Big( \sum_{k \in \Z} \| g_{j+k} \|_{L^p}^p \Big)^{1/p} .
\end{equation*}
Using the geometric decay to sum in $j \in \Z$ and recalling
\[\norm{g_{j+k}}_{L^{p}} = \norm{ (-\Delta)^{(1-\alpha)/2} f_{j+k} }_{L^{p}} \lesssim  2^{(j+k)(1-\alpha)}\norm{f_{j+k}}_{L^{p}}, \]
we obtain
$$
\Big( \sum_{k \in \Z} \| g_{j+k} \|_{L^p}^p \Big)^{1/p} \lesssim \| f \|_{\dot{B}^{1-\alpha}_{p,p}}.
$$
We then claim
\begin{equation}\label{eq:Besov_BV}
\| f \|_{\dot{B}^{1-\alpha}_{p,p}} \lesssim | f |_{\mathrm{BV}}
\end{equation}
for $n>1$ and $0<\alpha < n/2$. This will follow from a Gagliardo--Nirenberg--Sobolev type inequality.
\begin{proposition}[\cite{CDDD2003}]
Assume $\gamma>1$ or $\gamma<1-1/n$, and let $(s,q)$ satisfy $(s-1)q'/n=\gamma-1$ for some $1 < q \leq \infty$, where $1/q+1/q'=1$. Then, for any $0< \theta < 1$,
$$
\| f \|_{\dot{B}^t_{p,p}} \lesssim  \| f \|_{\dot{B}_{q,q}^s}^{1-\theta} | f|_{\mathrm{BV}}^\theta
$$
where $\frac{1}{p}=\frac{1-\theta}{q} + \theta$ and $t=(1-\theta)s + \theta$.
\end{proposition}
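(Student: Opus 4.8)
The plan is to pass to the wavelet side, where the statement becomes a purely discrete interpolation inequality, and then to feed in the fine structure of $\mathrm{BV}$ functions at the level of wavelet coefficients. Fix a compactly supported, sufficiently smooth orthonormal wavelet basis $\{\psi_I\}$ indexed by dyadic cubes $I \subset \mathbb{R}^n$, normalised in $L^2$, and recall the standard characterisation $\|g\|_{\dot{B}^r_{m,m}} \sim \big( \sum_I \big( |\langle g, \psi_I\rangle|\, |I|^{-r/n - 1/2 + 1/m} \big)^m \big)^{1/m}$ of the homogeneous Besov seminorms. Writing $d^{(r,m)}_I := |\langle f, \psi_I\rangle|\, |I|^{-r/n - 1/2 + 1/m}$, an elementary computation shows that the numerical constraints $\tfrac1p = \tfrac{1-\theta}{q} + \theta$ and $t = (1-\theta)s + \theta$ force the pointwise identity $d^{(t,p)}_I = \big( d^{(s,q)}_I \big)^{1-\theta}\big( d^{(1,1)}_I \big)^{\theta}$ for every $I$. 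Hence the desired inequality reduces to the sequence estimate $\big\| (d^{(t,p)}_I) \big\|_{\ell^p} \lesssim \big\| (d^{(s,q)}_I) \big\|_{\ell^q}^{1-\theta}\, \big\| (d^{(1,1)}_I) \big\|_{\ell^{1,\infty}}^{\theta}$ together with the replacement of the last factor by $|f|_{\mathrm{BV}}$.

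The key analytic input is precisely that replacement: for $n \geq 2$ one has $\big\| (d^{(1,1)}_I) \big\|_{\ell^{1,\infty}} \lesssim |f|_{\mathrm{BV}}$, a refinement of the embedding $\mathrm{BV} \hookrightarrow \dot{B}^1_{1,\infty}$ (which alone would give only the much weaker $\ell^\infty$ bound). I would prove it in two steps. First, at a fixed scale, the Sobolev--Poincaré inequality for $\mathrm{BV}$ on a cube gives $|\langle f, \psi_I\rangle| \leq \|f - f_{3I}\|_{L^{n/(n-1)}(3I)}\,\|\psi_I\|_{L^n} \lesssim |Df|(3I)\,|I|^{1/n - 1/2}$, so that $d^{(1,1)}_I \lesssim |Df|(3I)$; by bounded overlap this already yields $\#\{ I : |I| = 2^{-jn},\ d^{(1,1)}_I > \lambda \} \lesssim |f|_{\mathrm{BV}}/\lambda$ for each fixed $j$. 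The delicate point — and the heart of the matter — is to sum over all scales simultaneously: one must show that the total number of cubes $I$, over all scales, with $d^{(1,1)}_I > \lambda$ is still $\lesssim |f|_{\mathrm{BV}}/\lambda$, which is done by a stopping-time/covering argument exploiting that $Df$ is a finite Radon measure with no atoms. It is exactly here that $n \geq 2$ is needed; for $n = 1$ the statement fails, which is why the whole proposition requires $n > 1$.

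Finally I would carry out the discrete interpolation. Applying Hölder directly to the pointwise identity only produces the strong norm $\|(d^{(1,1)}_I)\|_{\ell^1}$, which may be infinite, so the crux is to upgrade from weak-$\ell^1$ to effective strong summability. For this I would decompose the index set into the level sets where $d^{(1,1)}_I$ has a given dyadic size, using the weak-$\ell^1$ bound to control the cardinality of each level, the $\ell^q$-bound — in which the relation $(s-1)q'/n = \gamma - 1$, equivalently $d^{(s,q)}_I = d^{(1,1)}_I\, |I|^{-\gamma/q'}$, enters — to control the associated weighted sums, and the scale parameter to separate the two regimes; summing the resulting estimates over the level index produces a geometric-type series, and one checks that it converges precisely when $\gamma > 1$ or $\gamma < 1 - 1/n$. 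This last summation — i.e.\ the verification that the off-diagonal hypothesis on $\gamma$ provides exactly the decay needed to convert the weak bound into the required strong one — is the main obstacle, the rest being bookkeeping with the parameters. (Alternatively, and more economically, one may simply invoke \cite{CDDD2003}, where this interpolation inequality is established.)
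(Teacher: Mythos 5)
The paper gives no proof of this proposition: it is quoted verbatim from \cite{CDDD2003}, so the only ``paper approach'' is the citation you yourself offer as the economical route. Your sketch is a faithful outline of the argument in that reference: the wavelet characterisation of $\dot{B}^r_{m,m}$, the pointwise multiplicativity $d^{(t,p)}_I = (d^{(s,q)}_I)^{1-\theta}(d^{(1,1)}_I)^{\theta}$ forced by the relations on $(t,p)$, the weak-$\ell^1$ bound on the $\dot{B}^1_{1,1}$-normalised wavelet coefficients of a $\mathrm{BV}$ function (valid only for $n\geq 2$), and the off-diagonal discrete interpolation in which $(s-1)q'/n=\gamma-1$ enters as $d^{(s,q)}_I = d^{(1,1)}_I|I|^{-\gamma/q'}$ --- all of these identities check out. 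Be aware, though, that the two steps you correctly identify as the heart of the matter --- the multiscale weak-$\ell^1$ estimate (which in \cite{CDDD2003} rests on the coarea formula and a combinatorial counting argument for cubes meeting a set of finite perimeter, not merely on the non-atomicity of $Df$) and the weak-to-strong upgrade in the level-set interpolation --- are named rather than proved; they are precisely the main theorems of the cited paper, so invoking it remains the honest resolution.
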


Indeed, taking $\gamma=0$, $s=1-n/2$ and $\theta=1-2\alpha/n$, which are admissible for $n>1$ and $0<\alpha<n/2$, one has
$$
\| f \|_{\dot{B}_{p,p}^{1-\alpha}} \lesssim \| f \|_{\dot{B}^{1-n/2}_{2,2}}^{1-\theta} |f|_{\mathrm{BV}}^\theta.
$$
Applying Bernstein's and Minkowski's inequalities as well as Littlewood--Paley theory, we see
\begin{align*}
 \| f \|_{\dot{B}^{1-\frac{n}{2}}_{2,2}} & \sim \Big( \sum_{j \in \Z} 2^{2j(1-\frac{n}{2})} \| f_j \|_{L^{2}}^2 \Big)^{1/2} \lesssim  \Big( \sum_{j \in \Z} 2^{2j(1-\frac{n}{2})} 2^{2jn(\frac{n-1}{n}-\frac{1}{2})} \| f_j \|_{L^{\frac{n}{n-1}}}^2 \Big)^{1/2} \\
 &=  \Big( \sum_{j \in \Z} \| f_j \|_{L^{\frac{n}{n-1}}}^2 \Big)^{1/2} \leq \| \Big( \sum_{j \in \Z} |f_j|^2 \Big)^{1/2} \|_{L^{\frac{n}{n-1}}} \sim \| f \|_{L^{\frac{n}{n-1}}}.
\end{align*}
Inequality \eqref{eq:Besov_BV} then follows from the Gagliardo--Nirenberg--Sobolev inequality \cite[Theorem 5.6.1.~(i)]{EG1992}, and we conclude
$$
\| \sup_{1\leq t \leq 2} |\mathcal{F}^{-1}(m_t \widehat{g}) | \|_{L^p} \lesssim \| f \|_{L^{n/(n-1)}}^{1-\theta} |f|_{\mathrm{BV}}^\theta \lesssim |f|_{\mathrm{BV}}.
$$
Thus it suffices to verify \eqref{assumption extra decay}. This is done separately in the cases when $m$ comes from a smooth kernel and when the maximal function is lacunary.

\subsection{Smooth kernel}
\label{sec:smooth}
Define the smooth fractional maximal function as follows. Let $\epsilon > 0$. Let $\varphi$ be a positive function with radial $L^{1}$ majorant such that $\widehat{\varphi}(\xi) \lesssim_{\varphi}(1+ |\xi|)^{- n/2 - \epsilon}$. For instance, any positive Schwartz function or even 
\[\varphi(x) = (1-|x|^{2})_{+}^{\epsilon} \]
with $\epsilon > 0$ will do (see Appendix B.5 in \cite{GrafakosClassical2014}). The subscript denotes the positive part as $f_+ = f \cdot 1_{\{f > 0\}}$. Now we want to analyse $M^{\varphi}_{\alpha},$ as defined in the introduction. A repetition of the proof of Proposition \ref{prop:L2} gives the $L^{2}$ bound
\[\norm{ \sup_{1 \leq t \leq 2} |\mathcal{F}^{-1}( (t |\xi|)^{\alpha} \widehat{\varphi}(t\xi) \widehat{g}_j ) |}_{L^{2}} \lesssim  ( 1_{\{j \leq 0\}}2^{j \alpha}  +  1_{\{j > 0\}}2^{j(- \frac{n}{2} + \alpha - \epsilon )} )  \norm{g_j}_{L^{2}}.\]
The $\epsilon$-decay gain in the above estimate continues to hold on $L^{n/(n-\alpha)}$, so the extra decay assumption \eqref{assumption extra decay} is satisfied for smooth convolution kernels. By $\S\S$\ref{subsec:extensionFull}, Theorem \ref{thm:main solid balls} holds in this case.

\subsection{Lacunary set of radii}
Similarly, there is a gain in the $L^2$ estimate when we study the lacunary fractional maximal function. Now $m(\xi) = |\xi|^{\alpha} \widehat{1_{B(0,1)}}( \xi)$ and
\[c_n M_{\alpha}^{lac}f (x)=  \sup_{k \in \Z} | 2^{k\alpha-nk} \int_{B(x,2^{k})} f(y) \, dy| \leq \Big( \sum_{k \in \Z} | 2^{k\alpha-nk} \int_{B(x,2^{k})} f(y) \, dy|^p \Big)^{1/p} \]
so that it suffices to use a bound for a single dilate instead of a supremum bound. Thus, it is enough to use  \eqref{eq:lacunary_single} to replace Proposition \eqref{prop:L2} by
\[
\| |\mathcal{F}^{-1} (m \widehat{g}_j)| \|_{L^2} \lesssim  ( 2^{j \alpha} 1_{\{j \leq 0\}}  +  2^{j ( - \frac{n+1}{2} +  \alpha ) } 1_{\{j > 0\}}  )  \norm{g_j}_{L^{2}},
\]
which has an extra $1/2$-decay compared to Proposition \ref{prop:L2}. 
After interpolation, this leads to an $\varepsilon$-decay gain in the $L^{n/(n-\alpha)}$ estimate so that \eqref{assumption extra decay} (without supremum) and Theorem \ref{thm:main solid balls} for lacunary set of radii follow.

\section{Proof of Theorem \ref{thm:spherical_technical}}
Recall the definition \eqref{sph:definition}.
By the characterisation through finite differences described in $\S$2, the sublinearity of $S_\alpha$ and by density, it suffices to prove 
$$
\|S_{\alpha}D^{h} f \|_{L^{q}} \lesssim \| f \|_{L^{p}}
$$ 
for all Schwartz functions $f$ uniformly in $h \in \R^n$.

Observe that by means of Fourier transform,
\begin{equation}\label{eq:in Fourier side}
S_\alpha D^h f (x)= \sup_{t > 0} \left \lvert \mathcal{F}^{-1} \left( t^{\alpha} |\xi| \widehat{\sigma}(t \xi)  \mathcal{F} (T_{h}f) (x) \right) \right \rvert,
\end{equation}
where $T_h$ is the Fourier multiplier operator \eqref{Th definition}. As described in $\S\S$\ref{subsec:model}, $T_h$ is bounded on $L^p$ for all $1<p<\infty$ uniformly in $h \in \R^n$ by the Mikhlin--H\"ormander multiplier theorem, so it plays no role in determining the boundedness range for $S_\alpha D^h$; for this reason, $T_h f$ is identified with $f$ in the rest of this section.

\subsection{The case $q \geq 2$}
\label{sec:q>2}
It is enough to consider the single scale version of the maximal function in \eqref{eq:in Fourier side}: suppose we can prove
\begin{equation}
\label{sph:ss_red}
\norm{ \sup_{1 \leq t \leq 2} \lvert \mathcal{F}^{-1} ( t^{\alpha} |\xi| \widehat{\sigma}(t \xi) \widehat{f_j} )  \rvert}_{L^q} \lesssim  (2^{j s_1 } 1_{\{ j \leq 0 \}} + 2^{-j s_2} 1_{\{ j > 0\}} ) \| f_j \|_{L^p}
\end{equation}
for $s_1, s_2 >0$. Then rescaling gives
$$
\norm{ \sup_{2^{-k} \leq t \leq 2^{-k+1}} \lvert \mathcal{F}^{-1} ( t^{\alpha} |\xi| \widehat{\sigma}(t \xi) \widehat{f_{j+k}} )  \rvert}_{L^q} \lesssim  (2^{j s_1 } 1_{\{ j \leq 0 \}} + 2^{-j s_2} 1_{\{ j > 0\}} ) \| f_{j+k} \|_{L^p}
$$
under the relation $\frac{1}{q}=\frac{1}{p}-\frac{\alpha-1}{n}$, and arguing as in $\S\S$\ref{subsec:extensionFull}
\begin{multline*}
\norm{\sup_{t > 0} |\mathcal{F}^{-1}(t^\alpha |\xi| \widehat{\sigma}(t\xi) \widehat{f}) | }_{L^{q}} \lesssim  \sum_{j \in \mathbb{Z}} (2^{j s_1 } 1_{\{ j \leq 0 \}} + 2^{-j s_2} 1_{\{ j > 0\}} ) \Big( \sum_{k \in \mathbb{Z}}  \norm{f_{j+k}}_{L^{p}} ^{q} \Big)^{1/q} \\
 \lesssim \| f \|_{L^p}
\end{multline*}
where the last inequality follows from Minkowski's inequality $(q \geq p $); controlling $\ell^{q}$ norm by $\ell^{2}$ norm, and applying Littlewood--Paley theory to see the inner sum as $L^{p}$ norm of $f$. The sum in $j$ converges as $s_1, s_2 >0$. Hence it suffices to prove \eqref{sph:ss_red}.

For low frequencies $j \leq 0$, we can use domination by the Hardy--Littlewood maximal function, Young's convolution inequality and Bernstein's inequality to see
\[\norm{ \sup_{1 \leq t \leq 2} \lvert \mathcal{F}^{-1} ( t^{\alpha} |\xi| \widehat{\sigma}(t \xi) \widehat{f_j} )  \rvert}_{L^q} \lesssim \norm{  M(-\Delta)^{1/2} f_j}_{L^q} \lesssim 2^{j(1+\alpha)} \norm{f_j}_{L^p}.  \]
Hence it suffices to prove \eqref{sph:ss_red} for $j > 0$.

\subsection{A local smoothing estimate}
The Fourier transform of the spherical measure is
\[
\widehat{\sigma}(\xi)=2\pi |\xi|^{-\frac{n-2}{2}} J_{\frac{n-2}{2}}(2\pi |\xi|)=\sum_{\pm} a_{\pm} (\xi) e^{\pm 2 \pi i|\xi|},
\]
where the symbols $a_{\pm}$ are in the class $S^{-(n-1)/2}$, that is
$$
|\partial_\xi^\gamma a_{\pm} (\xi)| \lesssim (1+|\xi|)^{-\frac{n-1}{2} - |\gamma|}
$$
for all multi-indices $\gamma \in \N^n_0$ (c.f. \cite[Chapter VIII]{bigStein}). Hence
\[
\mathcal{F}^{-1}( \widehat{\sigma}(t\xi) \widehat{f}) = \sum_{\pm} \int_{\mathbb{R}^{n} } e^{2\pi i ( x \cdot \xi  \pm  t|\xi|)}  a_\pm( t \xi)  \widehat{f}(\xi) \, d\xi,
\]
so that the connection to half-wave propagator $ e^{it \sqrt{-\Delta}} f (x):=\int_{\R^n} e^{i x \cdot \xi} e^{ i t |\xi| } \widehat{f}(\xi) d\xi$ is evident. We will quote the following result:
\begin{proposition}[Consequence of \cite{BD2015}]
\label{thm:local smoothing so far} 
For $n \ge 2, \, s \in \R,$ 
\begin{equation*}
\Big(\int_{1}^2 \| e^{it \sqrt{-\Delta}} f  \|_{L^{p}_{s-s_p + \theta}(\R^n)}^p  dt \Big)^{1/p} \lesssim \| f \|_{L^p_s(\mathbb{R}^{n})}
\end{equation*}
holds for $0 \le \theta < \frac{1}{p}$ and $s_p = (n-1)\big(\frac{1}{2} - \frac{1}{p}\big)$ whenever $p \geq \frac{2(n+1)}{n-1}$.
\end{proposition}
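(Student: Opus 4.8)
I would prove this as the sharp $L^p$ local smoothing estimate for the half‑wave propagator, which in the range $p\ge \frac{2(n+1)}{n-1}$ is by now a standard consequence of the Bourgain--Demeter $\ell^2$ decoupling theorem for the cone \cite{BD2015}. The plan has four steps: reduce to a single‑frequency estimate; prove that estimate at the critical exponent $p_c:=\frac{2(n+1)}{n-1}$ via decoupling; pass to $p>p_c$ by interpolation with an elementary fixed‑time bound; and reassemble the frequency pieces.

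\textbf{Reduction to one frequency.} Since $e^{it\sqrt{-\Delta}}$ commutes with the Bessel potential $\langle\nabla\rangle^{s}$, one may take $s=0$. Decompose $f$ into Littlewood--Paley pieces; the contribution of the unit‑frequency part is immediate (on bounded frequencies $e^{it\sqrt{-\Delta}}$ is uniformly bounded on $L^p$ by Mikhlin--H\"ormander, and $[1,2]$ has finite measure). For a dyadic $\lambda=2^{k}\ge 1$ it then suffices to prove the single‑frequency local smoothing estimate
\[
\Big(\int_{1}^{2}\| e^{it\sqrt{-\Delta}}f_k\|_{L^{p}(\R^{n})}^{p}\,dt\Big)^{1/p}\lesssim_{\delta}\lambda^{s_p-\frac1p+\delta}\|f_k\|_{L^{p}(\R^{n})}
\]
for every $\delta>0$. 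Indeed, choosing $\delta$ small enough that $\theta-\tfrac1p+\delta<0$, Minkowski's inequality (valid as $p\ge 2$) and the Littlewood--Paley characterisation of $L^{p}_{s-s_p+\theta}$ turn the sum over $k$ into a convergent geometric series dominated by $\|f\|_{L^{p}}$; this is exactly where the strict inequality $\theta<\tfrac1p$ is used.

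\textbf{The single‑frequency estimate via decoupling.} After localising $t$ to $[1,2]$ and performing the parabolic rescaling $\xi\mapsto\lambda\xi$, the space‑time Fourier transform of $e^{it\sqrt{-\Delta}}f_k$ lives in the $\lambda^{-1}$‑neighbourhood of the truncated light cone in $\R^{n+1}$; decompose it into the usual $\sim\lambda^{(n-1)/2}$ plates $\nu$ of angular aperture $\lambda^{-1/2}$. The Bourgain--Demeter $\ell^{2}$ decoupling inequality for the cone in $\R^{n+1}$ --- sharp precisely in the range $2\le p\le p_c=\frac{2(n+1)}{n-1}$ --- gives, at $p=p_c$,
\[
\| e^{it\sqrt{-\Delta}}f_k\|_{L^{p_c}(\R^{n}\times[1,2])}\lesssim_{\delta}\lambda^{\delta}\Big(\sum_{\nu}\|(e^{it\sqrt{-\Delta}}f_k)_{\nu}\|_{L^{p_c}(\R^{n}\times[1,2])}^{2}\Big)^{1/2}.
\]
Each plate piece is, after a shear (the cone is affine on a plate to within $O(1)$), controlled by $\|P_{\nu}f_k\|_{L^{p_c}(\R^{n})}$, the $L^{p_c}$ norm of the Fourier restriction of $f_k$ to the corresponding $\lambda^{-1/2}$‑sector of the annulus $\{|\xi|\sim\lambda\}$. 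Summing in $\ell^{2}$, applying H\"older in the $\sim\lambda^{(n-1)/2}$ sectors, and invoking the square function estimate for these sectors (with at most a $\lambda^{\delta}$ loss) bounds the right‑hand side by $\lambda^{\frac{n-1}{2}(\frac12-\frac1{p_c})+\delta}\|f_k\|_{L^{p_c}}$; the identity $p_c=\frac{2(n+1)}{n-1}$ is exactly what makes $\frac{n-1}{2}(\frac12-\frac1{p_c})=s_{p_c}-\frac1{p_c}$, so the single‑frequency estimate holds at $p=p_c$.

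\textbf{Extension to $p>p_c$ and the main point.} For $p>p_c$ I would interpolate the estimate just obtained with the elementary fixed‑time bound $\|e^{it\sqrt{-\Delta}}f_k\|_{L^{\infty}(\R^{n}\times[1,2])}\lesssim\lambda^{(n-1)/2}\|f_k\|_{L^{\infty}}$ (Young's inequality): writing $\tfrac1p=\tfrac{1-\vartheta}{p_c}$, a direct computation shows the interpolated exponent equals exactly $s_p-\tfrac1p$, which delivers the single‑frequency estimate for all $p\ge p_c$ and closes the argument. The entire proof rests on invoking the decoupling theorem of \cite{BD2015} correctly; beyond that there is no genuine obstacle. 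The two delicate bookkeeping points --- that the $\delta$‑losses and the strict inequality $\theta<\tfrac1p$ leave exactly enough room to sum the Littlewood--Paley pieces, and that the exponent produced by decoupling at $p_c$ is the sharp $s_{p_c}-\tfrac1{p_c}$ --- both hold with room to spare, which is why the threshold $p\ge\frac{2(n+1)}{n-1}$ (equivalently $n\ge 4$ for the intended application, with $n\ge 5$ for strict gains) is forced by the current state of decoupling rather than by the rest of the argument.
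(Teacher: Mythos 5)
Your proposal is correct and is essentially the paper's route: the paper simply cites Corollary 1.3(i) of Garrig\'os--Seeger together with the Bourgain--Demeter verification of cone decoupling, and what you have written out is precisely the standard deduction behind that citation (Littlewood--Paley reduction, $\ell^2$ decoupling into $\lambda^{-1/2}$-plates at $p_c=\tfrac{2(n+1)}{n-1}$ with the exponent bookkeeping $\tfrac{n-1}{2}(\tfrac12-\tfrac1{p_c})=s_{p_c}-\tfrac1{p_c}$, interpolation with the fixed-time $L^\infty$ bound for $p>p_c$, and summation using $\theta<\tfrac1p$). The only cosmetic slips are calling the rescaling onto the truncated cone ``parabolic'' and the aside about $n\ge 4$ (the application at $p=n-1$ requires $n\ge 5$ since $n-1\ge p_c$ forces $n\ge 2+\sqrt5$); neither affects the argument.
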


This can be found as Corollary 1.3 (i) in \cite{Garrigos2009} knowing that the conjectured value of $p_d$ in Table 1 of that paper has later been verified by \cite{BD2015}.

\begin{proposition}
\label{sph:prop}
Let $g$ be a Schwartz function and $j > 0$. For any $\epsilon > 0$
\[  \norm{ \sup_{1 \le t \le 2}  | \sigma_t * g_j | }_{L^{n-1}} \lesssim_{\epsilon} 2^{j ( \epsilon - 1)}  \norm{g_j}_{L^{n-1}}  .\]
\end{proposition}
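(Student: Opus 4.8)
\textbf{Proof proposal for Proposition~\ref{sph:prop}.}
The plan is to reduce the maximal estimate over $1\le t\le 2$ to the local smoothing estimate of Proposition~\ref{thm:local smoothing so far} applied at the exponent $p=n-1$, which is admissible precisely because $n-1\ge \frac{2(n+1)}{n-1}$ when $n\ge 5$. First I would split $\sigma_t * g_j$ into its two oscillatory halves using the asymptotic expansion $\widehat{\sigma}(\xi)=\sum_{\pm}a_{\pm}(\xi)e^{\pm 2\pi i|\xi|}$ recalled above, so that
\[
\sigma_t * g_j(x)=\sum_{\pm}\int_{\R^n}e^{2\pi i(x\cdot\xi\pm t|\xi|)}a_{\pm}(t\xi)\widehat{g_j}(\xi)\,d\xi .
\]
Since $g_j$ is frequency-localised to $|\xi|\sim 2^j$ and $1\le t\le 2$, we have $a_{\pm}(t\xi)\sim 2^{-j(n-1)/2}$ together with all the expected symbol bounds $|\partial_\xi^\gamma a_{\pm}(t\xi)|\lesssim 2^{-j(n-1)/2-j|\gamma|}$; multiplication by such a symbol (after rescaling the frequency variable to unit scale and back) is a bounded operation on $L^{n-1}$ with operator norm $\lesssim 2^{-j(n-1)/2}$, by a routine Mikhlin-type argument or by writing the symbol as a rapidly convergent sum of modulated bumps. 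Hence it suffices to control
\[
\Big\|\sup_{1\le t\le 2}\big|e^{\pm 2\pi i t\sqrt{-\Delta}}g_j\big|\Big\|_{L^{n-1}}
\]
with the gain $2^{-j(n-1)/2}$ to be supplied separately, where $g_j$ still denotes a function with frequency support in an annulus of size $2^j$.

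Next I would pass from the pointwise supremum in $t$ to an $L^{n-1}$-average in $t$ plus a derivative, via the fundamental theorem of calculus exactly as in the Sobolev-embedding step \eqref{eq:sobolev embedding L2}: for $F(t)=e^{it\sqrt{-\Delta}}g_j$ one has $\partial_t F(t)=i\sqrt{-\Delta}\,e^{it\sqrt{-\Delta}}g_j$, so
\[
\sup_{1\le t\le 2}|F(t)|\lesssim \Big(\int_1^2|F(t)|^{n-1}\,dt\Big)^{1/(n-1)}+\Big(\int_1^2|F(t)|^{n-1}\,dt\Big)^{\frac{n-2}{2(n-1)}}\Big(\int_1^2|\partial_tF(t)|^{n-1}\,dt\Big)^{\frac{1}{2(n-1)}},
\]
and after taking $L^{n-1}_x$ norms and using H\"older in $x$ this reduces matters to bounding $\big(\int_1^2\|e^{it\sqrt{-\Delta}}g_j\|_{L^{n-1}}^{n-1}\,dt\big)^{1/(n-1)}$ and the same quantity with one extra factor of $\sqrt{-\Delta}\sim 2^j$. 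Now apply Proposition~\ref{thm:local smoothing so far} with $p=n-1$, $s=0$, and a small $\theta\in(0,\frac1{n-1})$: since $g_j$ lives at frequency $2^j$, the Sobolev norm $\|g_j\|_{L^{n-1}_{s-s_p+\theta}}$ is comparable to $2^{j(-s_p+\theta)}\|g_j\|_{L^{n-1}}$ with $s_p=(n-1)(\tfrac12-\tfrac1{n-1})=\tfrac{n-3}{2}$. This yields
\[
\Big(\int_1^2\|e^{it\sqrt{-\Delta}}g_j\|_{L^{n-1}}^{n-1}\,dt\Big)^{1/(n-1)}\lesssim_{\epsilon}2^{j(-\frac{n-3}{2}+\epsilon)}\|g_j\|_{L^{n-1}},
\]
with an extra $2^j$ for the term carrying $\partial_t$; the worse of the two is the latter, giving $2^{j(1-\frac{n-3}{2}+\epsilon)}=2^{j(\frac{5-n}{2}+\epsilon)}$ before the symbol gain. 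Combining with the $2^{-j(n-1)/2}$ from the symbols $a_{\pm}$ produces a total power $2^{j(\frac{5-n}{2}-\frac{n-1}{2}+\epsilon)}=2^{j(2-n+\epsilon)}$; since $n-1\le n-2+\epsilon'$ fails—so one must check the arithmetic carefully—in fact the claimed bound $2^{j(\epsilon-1)}$ is what the sharp counting gives, so I would re-examine whether the derivative term genuinely costs a full $2^j$ or whether a more careful splitting (placing the $\partial_t$ on the factor without the supremum, or using that $\theta$ can be taken close to $\tfrac1{n-1}$) recovers the stated exponent.

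The main obstacle is exactly this bookkeeping at the endpoint: one has to combine the symbol decay $2^{-j(n-1)/2}$, the local-smoothing gain $2^{j(-s_p+\theta)}$ with $s_p=\tfrac{n-3}{2}$, and the cost of the $t$-derivative, and verify they add up to at most $2^{j(\epsilon-1)}$ for $n\ge 5$; the tightest configuration is when $n=5$, where $s_p=1$ and the local smoothing gain is only $2^{j(-1+\theta)}$, so every factor must be accounted for sharply and the freedom $\theta<\tfrac1{n-1}$ (which is genuinely available by \cite{BD2015}) must be used. A secondary technical point is justifying the $L^{n-1}\to L^{n-1}$ boundedness of the symbol multipliers $a_{\pm}(t\cdot)$ uniformly in $t\in[1,2]$ with the stated norm; this is standard but should be stated, e.g.\ by rescaling to frequency scale $1$, applying the Mikhlin--H\"ormander theorem to $a_{\pm}(2^j\,\cdot)$ whose derivatives are uniformly bounded after factoring out $2^{-j(n-1)/2}$, and rescaling back. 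No delicate oscillatory-integral estimate is needed beyond what is already packaged into Proposition~\ref{thm:local smoothing so far}.
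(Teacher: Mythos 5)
Your overall strategy is the same as the paper's (split $\widehat{\sigma}$ into $a_{\pm}e^{\pm 2\pi i|\xi|}$, peel off the symbol gain $2^{-j(n-1)/2}$, control $\sup_t$ by a Sobolev embedding in $t$, and invoke Proposition~\ref{thm:local smoothing so far} at $p=n-1$), but the bookkeeping contains two genuine errors and, as you yourself note, you do not arrive at the stated exponent. First, the local smoothing estimate is a \emph{loss}, not a gain: taking $s=0$ in Proposition~\ref{thm:local smoothing so far} controls $\|e^{it\sqrt{-\Delta}}g_j\|_{L^{n-1}_{-s_p+\theta}}$, and since the output is still frequency--localised at $2^j$ this converts into
\[
\Big(\int_1^2\|e^{it\sqrt{-\Delta}}g_j\|_{L^{n-1}}^{n-1}\,dt\Big)^{1/(n-1)}\lesssim 2^{j(s_p-\theta)}\|g_j\|_{L^{n-1}},
\qquad s_p=\tfrac{n-3}{2},
\]
with a \emph{positive} power $s_p-\theta>0$; your displayed bound $2^{j(-\frac{n-3}{2}+\epsilon)}$ has the wrong sign (the half--wave propagator does not gain derivatives in $L^p$, $p>2$; it merely loses fewer on average than at fixed time). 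Second, the $t$-derivative does not cost a full $2^j$. In the fundamental-theorem-of-calculus reduction the derivative term enters only with H\"older weight $\tfrac{1}{n-1}$ against $\tfrac{n-2}{n-1}$ for the underived term (your exponents $\tfrac{n-2}{2(n-1)},\tfrac{1}{2(n-1)}$ are not the correct homogeneous ones), so it costs $2^{j/(n-1)}$; equivalently, the paper uses the one-dimensional embedding $W^{r,n-1}_t\hookrightarrow L^\infty_t$ with $r>\tfrac{1}{n-1}$, at cost $2^{jr}$.

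With both corrections the count closes: the total exponent is
\[
-\tfrac{n-1}{2}+(s_p-\theta)+r=-\tfrac{n-1}{2}+\tfrac{n-3}{2}+(r-\theta)=-1+(r-\theta),
\]
and since $r$ may be taken just above $\tfrac{1}{n-1}$ while $\theta$ may be taken just below $\tfrac{1}{n-1}$ (this is exactly where the full strength of the decoupling-based local smoothing is used when $n=5$), one obtains $2^{j(\epsilon-1)}$ as claimed. So the missing ingredients are precisely the two items you flagged but left unresolved: the correct sign of the local smoothing loss and the observation that the supremum in $t$ over a unit interval costs only $\tfrac{1}{n-1}+$ of a derivative, not a whole one. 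As written, the proposal does not prove the proposition.
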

\begin{proof}
For $j>0$ and a smooth bump $\chi$ around $[1,2]$, we have
\begin{align*} 
\| \sup_{1 \le t \le 2}  | \sigma_t * g_j | \|_{L^{n-1}(\mathbb{R}^{n})} & \lesssim \| (1+\sqrt{-\partial_t^2})^r  \chi \cdot \sigma_t * g_j  \|_{L^{n-1}(\R^{n+1})} \cr 
											    & \lesssim 2^{j\left(r + s_p - \theta -\frac{n-1}{2} + \epsilon \right)} \| g_j  \|_{L^{n-1}(\R^n)}  \cr 
\end{align*}
where we used Sobolev embedding with $r > 1/(n-1)$, Proposition \ref{thm:local smoothing so far} with $p=n-1$ as well as Young's convolution inequality. Simplifying the exponent in accordance with  Proposition \ref{thm:local smoothing so far} 
\footnote{The full strength of \cite{BD2015} is only needed when $n=5$. When $n \geq 6$, the earlier results from \cite{Laba2002} will already do.}, 
we obtain the claim.
\end{proof}

\subsection{$L^{p} \to L^{q}$ estimates}
To finish the proof of \eqref{sph:ss_red}, we prove $L^{p} \to L^{q}$ estimates following  the interpolation scheme of Lee \cite{Lee2003} enhanced with the sharp local smoothing estimate. Denote 
\[S_j^{*} f (x) := \sup_{1 \le t \le 2} |\mathcal{F}^{-1}(   \widehat{\sigma}( t \xi ) |\xi| \widehat{f_j}(\xi))(x)|, \]
where $\widehat{f_j} = \widehat{f} \psi_j$ still stands for Fourier localization at the level of a Littlewood--Paley piece of frequency $2^j$. 

\begin{proposition}
Let $P$ be the open convex polygon with vertices
\begin{align*}
A&= \Big( \frac{n-2}{n}, \frac{2}{n} \Big), \quad B=\Big(\frac{n^2-2n-1}{n^2+1}, \frac{2(n-1)}{n^2+1} \Big)\\
C&=\Big(\frac{1}{n-1}, \frac{1}{n-1} \Big), \quad D= \Big( \frac{n-2}{n}, \frac{n-2}{n} \Big).
\end{align*}
Then
\[\norm{ S_j^{*} f }_{L^q} \lesssim 2^{- \varepsilon j} \norm{ f_j }_{L^p}\]
for some $\varepsilon > 0$ and all $j > 0$ provided that $(1/p,1/q) \in P$.
\end{proposition}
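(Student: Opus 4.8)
The plan is to establish $L^p \to L^q$ bounds for $S_j^*$ at each vertex of $P$ (together with one additional trivial $L^\infty$-type estimate to close up the polygon) and then interpolate using analytic interpolation of the maximal operator, exactly as in Lee \cite{Lee2003}. Note that compared to the spherical maximal function, the multiplier $\widehat{\sigma}(t\xi)|\xi|$ carries an extra factor $|\xi|$, which on the Littlewood--Paley piece at frequency $2^j$ costs a factor $2^j$; so every estimate below for $S_j^*$ is simply the corresponding estimate for the plain spherical maximal multiplier operator multiplied by $2^j$. The game is therefore to show that at each vertex the plain operator gains strictly more than $2^{-j}$, leaving an $\varepsilon$-power to spare.

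The key estimates are as follows. First, the vertex $C=(1/(n-1),1/(n-1))$ is the $L^{n-1}\to L^{n-1}$ estimate, and this is precisely Proposition \ref{sph:prop}: it yields a gain $2^{j(\epsilon-1)}$ for the plain spherical maximal multiplier, hence after the extra $2^j$ a net $2^{j\epsilon}$, which is the $\varepsilon$ to spare (with the orientation fixed so this is a genuine decay). This is the one place the sharp local smoothing input of Proposition \ref{thm:local smoothing so far}, available for $n\geq 5$, is essential. Second, the vertex $A=((n-2)/n, 2/n)$ is the $L^{n/(n-2)} \to L^{n/2}$ estimate; this should come from the known $L^{n/(n-2)}\to L^{n/2}$ bound for the $\alpha=1$ spherical fractional maximal function of Schlag--Sogge \cite{SS1997} (or equivalently a frequency-localized $TT^*$/oscillatory integral estimate), where the localization at frequency $2^j$ produces a clean power gain; the vertex $D=((n-2)/n,(n-2)/n)$ is an $L^{n/(n-2)}\to L^{n/(n-2)}$ estimate obtained from $A$ by composing with a Sobolev/Bernstein trade-off on the single Littlewood--Paley block. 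Third, the vertex $B=((n^2-2n-1)/(n^2+1), 2(n-1)/(n^2+1))$ is the delicate one: it is the $L^2$-based estimate, coming from the $L^2 \to L^2$ local-smoothing/square-function estimate for the wave propagator at the exponent $2(n+1)/(n-1)$ together with interpolation against an $L^2$ decay estimate derived from the Fourier decay $|\widehat{\sigma}(\xi)|\lesssim |\xi|^{-(n-1)/2}$ and the $g$-function argument as in Proposition \ref{prop:L2}. I would compute the bilinear/Plancherel bookkeeping to check that the claimed coordinates of $B$ are exactly where the two competing exponents balance.

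Once the four vertex estimates are in hand, the final step is interpolation. Since $S_j^*$ is a maximal (sublinear) operator, I would linearize it by freezing a measurable choice $t=t(x)\in[1,2]$, write the resulting operator as the boundary value of an analytic family $T_z$ (inserting a factor like $e^{z^2}(1+|\xi|)^{-\beta z}$ times powers of $2^j$ to move between the vertices), and apply Stein's analytic interpolation theorem; the $2^{-\varepsilon j}$ gains at the vertices combine convexly to a $2^{-\varepsilon' j}$ gain at any interior point $(1/p,1/q)\in P$, uniformly in the linearization. The main obstacle I anticipate is twofold: first, verifying that the precise rational coordinates of the vertices $A,B,C,D$ are correct, which requires carefully tracking the exponents in the Bessel asymptotics, the local smoothing exponent $s_p=(n-1)(1/2-1/p)$, and the scaling relation $1/q=1/p-(\alpha-1)/n$; and second, confirming that at $C$ — the only vertex using the full force of decoupling — the gain is strictly better than the $2^j$ loss, i.e.\ that $n\geq 5$ is genuinely the threshold, which is exactly the content of Proposition \ref{sph:prop} and the footnote after it. Everything else is routine interpolation bookkeeping.
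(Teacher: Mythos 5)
Your overall architecture matches the paper's: reduce to the frequency‑localized operator, pay a factor $2^j$ for the extra $|\xi|$, import Lee's estimates (1.7)--(1.10) for the plain localized spherical maximal operator, use Proposition \ref{sph:prop} for the $L^{n-1}\to L^{n-1}$ input, and interpolate. But there is a genuine gap in how you propose to produce the $2^{-\varepsilon j}$ gain. You claim that ``at each vertex the plain operator gains strictly more than $2^{-j}$, leaving an $\varepsilon$-power to spare.'' This is false, and it is false by design: the vertices $A$, $B$, $D$ are exactly the points where the interpolated decay exponent vanishes (that is how their coordinates arise), and at $C$ the bound for $S_j^*$ is $\lesssim_\delta 2^{j\delta}\|f_j\|_{L^{n-1}}$ --- a small \emph{loss}, not a gain (your parenthetical ``net $2^{j\epsilon}$ \dots is a genuine decay'' has the sign backwards; $2^{j\epsilon}$ with $j>0$ is growth). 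If you only establish zero-decay (or $\delta$-loss) bounds at the four vertices, interpolation among them yields zero decay throughout $P$, not the required $2^{-\varepsilon j}$.

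The correct bookkeeping, which is what the paper does, is to interpolate five estimates located at \emph{other} points of the $(1/p,1/q)$ square: $L^1\to L^1$ and $L^1\to L^\infty$ with loss $2^{2j}$, $L^{n-1}\to L^{n-1}$ with loss $2^{j\delta}$ (from Proposition \ref{sph:prop}), and the two $L^2$-based bounds $L^2\to L^2$ with gain $2^{-j(n-4)/2}$ and $L^2\to L^{2(n+1)/(n-1)}$ with gain $2^{-j(n^2-4n-3)/(2n+2)}$; the latter two are genuinely decaying precisely when $n\geq 5$, and they are the sole source of the $\varepsilon$. The open polygon $P$ is then the region where the affinely interpolated exponent is strictly negative; its vertices $A$, $B$, $D$ are the zero-crossings on the segments joining $(1,0)$ or $(1,1)$ to the $L^2$-based points (e.g., $A$ lies on the segment from $(1,0)$ to $(1/2,1/2)$, $D$ on the segment from $(1,1)$ to $(1/2,1/2)$, $B$ on the segment from $(1,0)$ to $(1/2,\tfrac{n-1}{2(n+1)})$), not the locations of independent known estimates such as a Schlag--Sogge bound. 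Your linearization/Stein-interpolation remarks are fine in principle, but without at least one input carrying strict decay the scheme cannot close.
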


\begin{proof}
Since $\supp  ( \widehat{\sigma}\cdot \psi_j ( t \cdot )) \subset \{ |\xi| \sim 2^{j} \}$, we can assume that $\widehat{f}$ is supported in an annulus around $|\xi| = 2^{j}$. We use the following bounds: 
\begin{align}
\norm{ S_j^{*}f }_{L^1} &\lesssim 2^{2j} \norm{ f }_{L^1} \nonumber \\
\norm{ S_j^{*}f }_{L^\infty} &\lesssim 2^{2j} \norm{ f }_{L^1} \nonumber \\
\label{lplq}
\norm{ S_j^{*}f }_{L^{n-1}} &\lesssim_{\delta} 2^{j \delta} \norm{ f }_{L^{n-1}}, \quad \textrm{for all} \ \delta > 0 \\
\norm{ S_j^{*}f }_{L^2} &\lesssim 2^{- \frac{n-4}{2}j} \norm{ f }_{L^2} \nonumber \\
\norm{ S_j^{*}f }_{L^{\frac{2(n+1)}{n-1}}} &\lesssim 2^{- j \frac{n^2-4n-3}{2n+2}} \norm{ f }_{L^2} \nonumber.
\end{align}  
To verify \eqref{lplq}, use Proposition \ref{sph:prop} as well as Young's convolution inequality to obtain
\[\norm{ S_j^{*}f }_{L^{n-1}} \lesssim_\delta 2^{-j(1- \delta)}  \norm{ (-\Delta)^{1/2} f }_{L^{n-1}} \lesssim 2^{j \delta} \norm{ f }_{L^{n-1}}.\]
The other inequalities follow similarly, that is, by borrowing the corresponding bounds for the spherical maximal function (inequalities (1.7)--(1.10) in \cite{Lee2003}), and applying Young's convolution inequality. Interpolating the bounds above, we obtain the claimed proposition. 
\end{proof}

For each $p > 1$, we want to find the values of $\alpha$ such that $(1/p,1/q) \in P$ when $(\alpha-1)/n = 1/p-1/q$ and $q \geq 2$. When $q \geq 2$ is assumed, this happens when
\[
\frac{n}{n-2} < p \leq  \frac{n^2+1}{n^2-2n-1}, \quad \alpha < \frac{n^2-2n-1}{n-1}  -\frac{2n}{p(n-1)}
\]
or 
\[
\frac{n^2+1}{n^2-2n-1} < p \leq n-1 ,\quad \alpha < \frac{n-1}{p} .
\]
This concludes the proof for the case $q \geq 2$.  Notice that the restriction $q \ge 2$ is not dictated by validity of $L^{p} \to L^{q}$ estimates but it was required in order to upgrade the single scale bounds to bounds for the full maximal operator in $\S\S$\ref{sec:q>2}.

\subsection{The case $q \leq 2$}

Next we remove the assumption $q \geq 2$. Let 
\[T^{*} f(x) = \sup_{t > 0} |\mathcal{F}^{-1} ( (t|\xi|)^{\alpha} \widehat{\sigma}(t\xi) \widehat{f}(\xi))(x)|.\] 
The operator $S_{\alpha}$ in \eqref{eq:in Fourier side} can be written
\[S_{\alpha} = T^* I_{\alpha-1}  T_h f \] 
where $\widehat{I_{\alpha-1}f} = |\xi|^{1-\alpha} \widehat{f}$ is the Riesz potential of order $\alpha -1$ and $T_h$ are as in \eqref{Th definition}. As discussed in $\S\S$\ref{subsec:model}, $T_h$ are bounded in $L^{p}$ for all $p > 1$. Also, by the Hardy--Littlewood--Sobolev inequality $I_{\alpha -1}$ is bounded $L^p \to L^q,$ for $p,q$ obeying $\frac{\alpha-1}{n} = \frac{1}{p}-\frac{1}{q}$. Therefore, it is enough to analyse the operator $T^*$. 

Let $m(\xi) = |\xi|^{\alpha} \widehat{\sigma}(\xi)$ and take a Littlewood--Paley function $\psi$ (as in $\S$\ref{sec:definitions}). We define $m_{1} = \sum_{j > 0} \psi_j m$ and $m_0 = \sum_{j \leq 0} \psi_j m$. Take $T_j^{*}$ to be as $T^*$ but $m$ replaced by $m_j$. Then
\[
 T^*f \le T_0^{*}f+ T^*_1f.
\]
We first bound $T_0^{*}$. A straightforward computation shows that $m_0$ is bounded and for any multi-index $\beta \in \mathbb{N}^{n}$ with $|\beta| = k$, $k \leq n +1$
\[ | \partial_{\xi}^{\beta} m_0(\xi) | \lesssim  |\xi|^{\alpha-k}   \]
so that 
\[\norm{ (1 + | \cdot |)^{n+1}  \mathcal{F}^{-1}( m_0 )  }_{L^{\infty}} \lesssim 1 .  \] 
Consequently 
\[T_0^{*}f \lesssim Mf\]
and boundedness in any $L^{p}$ with $p> 1$ follows from that of the Hardy--Littlewood maximal function.

To bound $T_1^{*}$, we use a part of Theorem B from \cite{RdF1986}:
\begin{theorem}[Rubio de Francia \cite{RdF1986}]
Let $m$ be a function in $C^{s+1}(\R^n)$ for some integer $s > n/2$ such that $|D^{\gamma}m(\xi)| \lesssim |\xi|^{-a},$ for all $|\gamma| \le s+1$. Suppose also that $a > \frac{1}{2}.$ Then the maximal 
multiplier operator $T^* f := \sup_{t > 0} |\mathcal{F}^{-1}( m(t \cdot) \widehat{f})|$ is bounded in $L^r,$ for 
\[ 
\frac{2n}{n+2a -1} < r \le 2.
\]
\end{theorem}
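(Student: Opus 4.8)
The statement is a maximal-multiplier theorem of the type treated for the spherical maximal function, and the natural approach is the Littlewood--Paley/$g$-function scheme. \emph{Reductions.} Write $m = m\phi_0 + \sum_{k\ge1}m_k$, where $\phi_0$ is a fixed bump near the origin and $m_k=\psi_k m$ is supported in $\{|\xi|\sim2^k\}$. The low-frequency piece $\mathcal{F}^{-1}(m\phi_0)$ has an integrable radially decreasing majorant, so $\sup_{t>0}|\mathcal{F}^{-1}(m\phi_0(t\cdot)\widehat f\,)|\lesssim Mf$ and this part is bounded on every $L^r$, $r>1$. By sublinearity of the supremum it therefore suffices to bound, on $L^r$ with a constant summable in $k$, each single-annulus piece $T_k^*f=\sup_{t>0}|\mathcal{F}^{-1}(m_k(t\cdot)\widehat f\,)|$. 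Since the maximal operator commutes with dilations, $\|T_k^*f\|_{L^r}=\|\widetilde T_k^*f\|_{L^r}$, where $\widetilde T_k^*$ is built from $\widetilde m_k(\xi)=m_k(2^k\xi)$, supported in $\{|\xi|\sim1\}$ and obeying $\|\partial^\gamma\widetilde m_k\|_{L^\infty}\lesssim2^{k(|\gamma|-a)}$ for $|\gamma|\le s+1$. One is thus reduced to proving $\|\widetilde T_k^*f\|_{L^r}\lesssim 2^{-\varepsilon(r)k}\|f\|_{L^r}$ for each $k\ge1$ and a suitable $\varepsilon(r)>0$.

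\emph{The $L^2$ estimate.} Exactly as in Proposition \ref{prop:L2}, the fundamental theorem of calculus in $t$ gives $\sup_{1\le t\le2}|\widetilde T_{k,t}f|^2\lesssim\int_1^2(|\widetilde T_{k,t}f|^2+|T_{\xi\cdot\nabla\widetilde m_k,t}f|^2)\,\frac{dt}{t}$, so that Plancherel and Fubini reduce the $L^2$ bound to $\|\widetilde m_k\|_{L^\infty}\lesssim2^{-ka}$ and $\|\xi\cdot\nabla\widetilde m_k\|_{L^\infty}\lesssim2^{k(1-a)}$; this yields $\|\sup_{1\le t\le2}|\widetilde T_{k,t}f|\|_{L^2}\lesssim2^{-k(a-1/2)}\|f\|_{L^2}$, and passing to the full supremum over $t>0$ is free since for $t\sim2^{-l}$ only the frequencies $|\xi|\sim2^l$ are involved, so one sums in $l$ using Littlewood--Paley orthogonality. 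The hypothesis $a>1/2$ is precisely what makes this estimate decay in $k$.

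\emph{The endpoint estimate — the main obstacle.} One needs a bound below $L^2$, say a weak-type estimate $\|\widetilde T_k^*f\|_{L^{1,\infty}}\lesssim2^{k\gamma}\|f\|_{L^1}$ with $\gamma=\tfrac{n+1}{2}-a$. Interpolating the trivial bound $\|\widetilde T_k^*f\|_{L^\infty}\lesssim\|\mathcal{F}^{-1}\widetilde m_k\|_{L^1}\|f\|_{L^\infty}$ against the $L^2$ estimate only reaches $r\ge2$, which is the wrong side; to get $r<2$ one must exploit the oscillation of $\widetilde m_k$. Concretely I would analyse the kernel $\widetilde K_k=\mathcal{F}^{-1}\widetilde m_k$ by stationary phase / repeated integration by parts — legitimate because $\widetilde m_k\in C^{s+1}$ with $s>n/2$ — splitting it into a piece concentrated in an $O(2^{-k})$-neighbourhood of its singular support, which behaves like a normalised fractional-average kernel and is controlled by a fractional maximal function of the right order, plus a rapidly decaying tail dominated by $Mf$; here $s>n/2$ guarantees integrability of the tail. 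The amplitude $2^{-ka}$ is the gain coming from curvature/oscillation, and balancing it against the thin-shell geometry of the singular support reproduces exactly $\gamma=\tfrac{n+1}{2}-a$. Establishing this sharp single-scale endpoint bound is the heart of the matter.

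\emph{Conclusion.} Interpolating the $L^2$ decay $2^{-k(a-1/2)}$ with the weak endpoint $2^{k(\frac{n+1}{2}-a)}$ gives $\|\widetilde T_k^*f\|_{L^r}\lesssim2^{-\varepsilon(r)k}\|f\|_{L^r}$ with $\varepsilon(r)>0$ precisely when $\tfrac1r<\tfrac{n+2a-1}{2n}$, i.e. $r>\tfrac{2n}{n+2a-1}$; together with the endpoint $r=2$ this covers $\tfrac{2n}{n+2a-1}<r\le2$. Summing the geometric series in $k\ge1$ and adding back the harmless low-frequency term yields $\|T^*f\|_{L^r}\lesssim\|f\|_{L^r}$ on the asserted range.
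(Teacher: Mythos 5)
This statement is not proved in the paper at all: it is quoted verbatim (as ``a part of Theorem B'') from Rubio de Francia's paper \cite{RdF1986}, so there is no internal proof to compare against. Judged on its own terms, your argument has a genuine gap at precisely the step you yourself flag as ``the heart of the matter''. The reductions and the $L^2$ estimate (the square-function/fundamental-theorem-of-calculus trick giving the single-annulus decay $2^{-k(a-1/2)}$, with $a>\tfrac12$ making it summable) are fine and standard, and your observation that interpolating with the trivial $L^\infty$ bound only reaches $r\ge 2$ is correct. But the claimed weak-type endpoint $\|\widetilde T_k^* f\|_{L^{1,\infty}}\lesssim 2^{k(\frac{n+1}{2}-a)}\|f\|_{L^1}$ is asserted, not proved, and the sketch offered for it cannot be carried out under the stated hypotheses. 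Those hypotheses provide no oscillatory or geometric structure: $m_k$ is merely $2^{-ka}$ times a function with $O(1)$ derivatives at unit scale up to order $s+1$, where $s+1$ may be only slightly larger than $n/2+1$. There is no phase on which to run stationary phase, no ``singular support'' and no curvature; the kernel $\mathcal{F}^{-1}m_k$ need not concentrate near any hypersurface (it may spread over the whole unit ball, or concentrate near a lower-dimensional set, e.g.\ when $m$ involves the Fourier transform of arclength measure on a nondegenerate curve, which is still admissible), and since $s+1$ can be $\le n$, the pointwise tail bound $|x|^{-(s+1)}$ obtained from integration by parts is not integrable, so the radial-decreasing-majorant route to a weak $(1,1)$ bound for the dilation-supremum is unavailable. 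The exponent $\tfrac{n+1}{2}-a$ is exactly what is needed to make your interpolation reproduce the stated range, i.e.\ it is reverse-engineered from the conclusion rather than derived.

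For the record, Rubio de Francia's actual argument does not pass through a weak $(1,1)$ endpoint at all: below $L^2$ he uses weighted $L^2$ inequalities of the form $\int |T^*f|^2 u \lesssim \int |f|^2\, \mathfrak{M}u$ for a suitable maximal operator $\mathfrak{M}$, combined with the duality $\|T^*f\|_{L^r}^2=\sup\{\int |T^*f|^2 u:\ \|u\|_{L^{(r/2)'}}\le 1\}$ for $r<2$; the threshold $\tfrac{2n}{n+2a-1}$ then comes from the range of $L^{(r/2)'}$-boundedness of $\mathfrak{M}$. To complete a proof you would need either to actually establish your weak-type bound under these minimal hypotheses (which is far from clear and is the entire difficulty) or to switch to the weighted-$L^2$/duality scheme.
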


Since $\sum_{j > 0} \psi_j m $ is smooth and satisfies $|D^{\gamma}m(\xi)| \lesssim |\xi|^{-a},$ for all $\gamma \in \mathbb{N}^{n}$ with $a = \frac{n-1}{2} - \alpha$, we can apply the theorem to conclude the proof whenever 
\[ 
 \frac{2n}{2n - 2 -2\alpha} < q \le 2, \quad a > \frac{1}{2}
\]
which is equivalent to $p > \frac{n}{n-2}$ and $\alpha < \frac{n-2}{2} < \alpha(p)$. However, given $p > \frac{n}{n-2}$, the condition $\alpha < \frac{n-2}{2}$ is automatically satisfied whenever $q \leq 2$. Hence $\alpha < \alpha(p)$ is an active constraint only when $q > 2$.

\qed
 
\bibliography{Reference}

\bibliographystyle{abbrv}

\end{document}